\newcommand\tabcaption{\def\@captype{table}\caption}
\newtheorem{theorem}{Theorem}
\newtheorem{cons}[theorem]{Construction}
\newtheorem{lemma}[theorem]{Lemma}
\newtheorem{claim}{Claim}[section]
\theoremstyle{definition}
\newtheorem{fact}[theorem]{Fact}
\title{\Large \bf Rainbow triangles in edge-colored complete graphs}
\date{}
\author{{\small Xiaozheng Chen, Xueliang Li}\\
{\small Center for Combinatorics and LPMC}\\
{\small Nankai University, Tianjin 300071, China}\\
{\small Email: cxz@mail.nankai.edu.cn, lxl@nankai.edu.cn}
}
\begin{document}

\maketitle

\begin{abstract}
Let $G$ be a graph of order $n$ with an edge-coloring $c$,
and let $\delta^c(G)$ denote the minimum color-degree of $G$.
A subgraph $F$ of $G$ is called rainbow if any two edges of $F$
have distinct colors. There have been a lot results in the existing literature on
rainbow triangles in edge-colored complete graphs. Fujita and Magnant showed that
for an edge-colored complete graph $G$ of order $n$, if $\delta^c(G)\geq \frac{n+1}{2}$, then
every vertex of $G$ is contained in a rainbow triangle.
In this paper, we show that if $\delta^c(G)\geq \frac{n+k}{2}$,
then every vertex of $G$ is contained in at least $k$ rainbow triangles, which can be seen as a
generalization of their result.
Li showed that for an edge-colored graph $G$ of order $n$,
if $\delta^c(G)\geq \frac{n+1}{2}$, then $G$ contains a rainbow triangle.
We show that if $G$ is complete and $\delta^c(G)\geq \frac{n}{2}$,
then $G$ contains a rainbow triangle and the bound is sharp.
Hu et al. showed that for an edge-colored graph $G$ of order $n\geq 20$,
if $\delta^c(G)\geq \frac{n+2}{2}$, then $G$ contains two vertex-disjoint rainbow triangles.
We show that if $G$ is complete with order $n\geq 8$ and $\delta^c(G)\geq \frac{n+1}{2}$,
then $G$ contains two vertex-disjoint rainbow triangles.
Moreover, we improve the result of Hu et al. from $n\geq 20$ to $n\geq 7$, the best possible.\\[2mm]
{\bf Keywords:} edge-coloring; edge-colored complete graph; rainbow triangle; color-degree condition\\[2mm]
{\bf AMS Classification 2020:} 05C15, 05C38.

\end{abstract}

\section{Introduction}

In this paper, we consider finite simple undirected graphs.
An edge-coloring of a graph $G$ is a mapping $c: E(G)\rightarrow \mathbb{N}$,
where $\mathbb{N}$ denotes the set of natural numbers.
A graph $G$ is called an \emph{edge-colored graph}
if $G$ is assigned an edge-coloring.
The color of an edge $e$ of $G$ and the set of colors assigned to $E(G)$
are denoted by $c(e)$ and $C(G)$, respectively. For subset $X$
of vertices of $G$, we use $G[X]$ to denote the subgraph of $G$ induced by
$X$. For $V_1,V_2\subset V(G)$ and $V_1\cap V_2=\emptyset$,
we set $E(V_1,V_2)=\{xy\in E(G),x\in V_1,y\in V_2\}$,
and when $V_1=\{v\}$, we write $E(u,V_2)$ for $E(\{u\},V_2)$.
The set of colors appearing on the edges between $V_1$ and $V_2$ in $G$
is denoted by $C(V_1,V_2)$.
When $V_1=\{v\}$, use $C(v,V_2)$ instead of $C(\{v\},V_2)$.
The set of colors appearing on the edges of a subgraph $H$ of $G$,
is denoted by $C(H)$; moreover if $H=G[V_1]$,
we write $C(V_1)$ for $C(G[V_1])$.
A subset $F$ of edges of $G$ is called \emph{rainbow}
if no pair of edges in $F$ receive the same color,
and a graph is called \emph{rainbow} if its edge-set is rainbow.
In this paper, we only consider rainbow triangles in an edge-colored complete graph.

For a vertex $v\in V(G)$, the \emph{color-degree}
of $v$ in $G$ is the number of distinct colors assigned to the edges incident to $v$,
denoted by $d^c_G(v)$.
We use $\delta^c(G)=\mbox{min}\{d^c_G(v):v\in V(G)\}$
to denote the \emph{minimum color-degree} of $G$.
The set of neighbors of a vertex $v$
in a graph $G$ is denoted by $N_G(v)$.
Let $N_i(v)$ denote the set of vertices with edges of
color $i$ adjacent to $v$ for $1\leq i\leq d^c(v)$,
that is $N_i(v)=\{u\in N_G(v),c(uv)=i\}$.
Let $\Delta^{mon}(v)$ be the maximum number of incident edges of $v$
with the same color,
that is $\Delta^{mon}(v)=\mbox{max}\{|N_i(v)|,1\leq i\leq d^c(v)\}$.
Then the \emph{monochromatic-degree} of $G$ is the maximum
$\Delta^{mon}(v)$ over all vertices $v$ in $G$, denoted by $\Delta^{mon}(G)$.
Let $\psi$ be the incidence function
that associates with each edge of $G$ an unordered pair of (not necessarily
distinct) vertices of $G$.
If $e$ is an edge and $u$ and $v$ are vertices such that $\psi (e) =
\{u,v\}$, then $e$ is said to join $u$ and $v$, and the vertices $u$ and $v$ are called the ends
of $e$. Let $R$ be a subset of $E(G)$.
Then $\psi(R)$ denotes the set of all vertices incident with the edges in $R$,
that is $\psi(R)=\cup _{e\in R}\psi(e)$.
For other notation and terminology not defined here, we refer to \cite{B}.

There have been many results on rainbow triangles in the existing literature.
These results can be divided into two parts:
local property and global property on rainbow triangles.
As for local property, Fujita and Magnant showed the following result.

\begin{theorem}[\cite{FM}]\label{1}
Let $G$ be an edge-colored complete graph of order $n$.
If $\delta^c(G)\geq \frac{n+1}{2}$,
then every vertex of $G$ is contained in a rainbow triangle.
\end{theorem}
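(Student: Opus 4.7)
My plan is to argue by contradiction. Fix a vertex $v$ and assume $v$ lies in no rainbow triangle. Writing $k = d^c(v) \geq \frac{n+1}{2}$ and $A_i = N_i(v)$ for $1 \leq i \leq k$, the absence of a rainbow triangle through $v$ translates into one clean structural rule: for any $u \in A_i$ and $w \in A_j$ with $i \neq j$, the triangle $vuw$ already uses colors $i$ and $j$ on its two edges at $v$, so forbidding rainbow forces $c(uw) \in \{i,j\}$. Inside a single class $A_i$ there is no constraint. The entire task is then to combine this rule with the hypothesis $\delta^c(G) \geq \frac{n+1}{2}$ to derive a numerical contradiction.

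First I will locate the singleton color classes at $v$. Since $\sum_i |A_i| = n-1$ and each $|A_i| \geq 1$, letting $a$ denote the number of indices with $|A_i| = 1$, the remaining $k-a$ classes have size at least $2$, giving $n-1 \geq a + 2(k-a)$, that is $a \geq 2k - n + 1 \geq 2$. So at least two singleton classes exist; fix one and write $A_i = \{u_i\}$. Because $A_i \setminus \{u_i\} = \emptyset$, every edge from $u_i$ other than $u_iv$ enters some $A_j$ with $j \neq i$ and, by the rule above, has color in $\{i,j\}$. Hence $C(u_i) = \{i\} \cup T_i$, where
\[
T_i = \bigl\{\, j \neq i : u_i \text{ has a $j$-colored edge to } A_j\,\bigr\},
\]
and $d^c(u_i) \geq \frac{n+1}{2}$ yields $|T_i| \geq \frac{n-1}{2}$.

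The endgame is a double count of $\sum_{|A_i|=1} |T_i|$. From below this sum is at least $a \cdot \frac{n-1}{2}$. From above I split each $T_i$ according to whether $j$ indexes a singleton or a non-singleton class. For the singleton-to-singleton part, the edge between two singleton representatives $u_i, u_j$ is forced into $\{i,j\}$, so each unordered pair of singleton indices contributes exactly once to the total, summing to $\binom{a}{2}$. The singleton-to-non-singleton part is bounded trivially by $a(k-a)$, since for each singleton $i$ and non-singleton $j$ at most $1$ is contributed. Combining the two sides and simplifying gives $a \leq 2k - n$, directly contradicting $a \geq 2k - n + 1$. The step I expect to require the most care is recognizing the tournament structure on the singleton classes: any cruder upper bound such as the trivial $|T_i| \leq k-1$ is too weak to close the argument, and the crux is that the rule $c(u_iu_j) \in \{i,j\}$ collapses the singleton-to-singleton contribution to exactly $\binom{a}{2}$.
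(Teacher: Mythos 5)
Your argument is correct: the forced-color rule $c(uw)\in\{i,j\}$ for $u\in N_i(v)$, $w\in N_j(v)$ is exactly what the absence of a rainbow triangle at $v$ gives in a complete graph, the count $a\ge 2k-n+1$ of singleton classes is right, and the double count $a\cdot\frac{n-1}{2}\le\binom{a}{2}+a(k-a)$, which uses that each edge between two singleton representatives contributes to exactly one of the sets $T_i$, cleanly yields $a\le 2k-n$, a contradiction. One point worth noting: the paper itself does not prove this statement; it quotes it from Fujita and Magnant and uses it as the base case ($k=1$) of the induction proving the generalization (Theorem \ref{3}). So your proof is an independent, self-contained argument, and it is in fact the same mechanism the paper deploys for Theorem \ref{3}: your sets $T_i$ restricted to singleton indices are precisely the out-neighbourhoods in the paper's orientation $D$ of $G[N_1]$ (your observation that each singleton pair contributes exactly once is the statement that $\sum_u d^+_D(u)=\binom{s}{2}$ for a tournament), your bound $|T_i\cap\{\text{non-singleton indices}\}|\le k-a$ matches the paper's bound $|C(u,N_2)\setminus\{c(uv)\}|\le t-s$ via its Claim \ref{c2}, and the final inequality plays the role of the paper's out-degree-sum contradiction. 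What your write-up buys is that the base case no longer needs to be imported from \cite{FM}; what the paper's version buys is that the same orientation bookkeeping, augmented by the sets $R_u(v)$ and $R_u'(v)$, carries the induction to $k$ rainbow triangles at each vertex.
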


The lower bound on $\delta^c(G)$ in Theorem \ref{1} is sharp.
To see this, we obtain the following construction.
It will show that $v$ is not contained in any rainbow triangle.

\begin{cons}\label{2}
Consider a complete graph $G=K_{2n}$.
Let $v$ be a vertex of $G$ such that $d^c(v)=n$.
Set $|N_1(v)|=1$ and $|N_i(v)|=2$ for $2\leq i\leq n$.
Color the edges between $N_1(v)$ and $N_i(v)$
by $i$ for $2\leq i\leq n$.
For any vertex $u\in N_i(v)$,
color two edges between $u$ and $N_j(v)$ by $i$ and $j$, respectively,
for $2\leq i\neq j\leq n$.
Color the edge in $G[N_i(v)]$ by a new color different from the colors of edges incident with $v$.
Then we get an edge-colored complete graph with
$\delta^c(G)=n$; see Figure \ref{fig1}.
\end{cons}

\begin{figure}[htbp]
  \centering
 \scalebox{1}{\includegraphics[width=2.4in,height=1.5in]{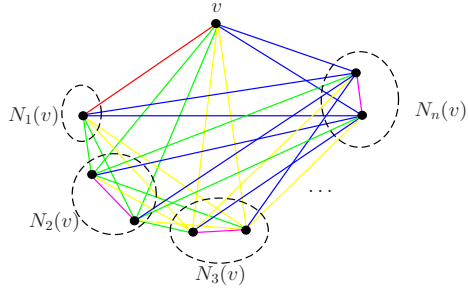}}\\
 \captionsetup{font={scriptsize}}
  \caption{The structure of $G$ in Construction \ref{2}}\label{fig1}
\end{figure}

Using Theorem \ref{1}, and repeatedly deleting the vertices
of rainbow triangles at $v$,
it is easy to obtain the following sufficient condition for the
existence of $k$ edge-disjoint rainbow triangles at $v$
and the lower bound is sharp.

\begin{fact}\label{4}
Let $G$ be an edge-colored complete graph of order $n$.
If $\delta^c(G)\geq \frac{n-1}{2}+k$,
then every vertex of $G$ is contained in at least $k$ edge-disjoint rainbow triangles.
\end{fact}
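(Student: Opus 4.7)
The plan is to prove Fact \ref{4} by iterating Theorem \ref{1}: produce one rainbow triangle at $v$, then throw away its two other vertices and apply Theorem \ref{1} again to the smaller complete graph, repeating this $k$ times. Fix a vertex $v$ and set $G_{0}=G$. At step $j$ (for $j=1,\ldots,k$), if the color-degree hypothesis holds, Theorem \ref{1} gives a rainbow triangle $T_{j}=vx_{j}y_{j}$ in $G_{j-1}$; I then let $G_{j}=G_{j-1}-\{x_{j},y_{j}\}$, which is still a complete graph on $n-2j$ vertices. Edge-disjointness is automatic: the vertices $x_{j},y_{j}$ are removed before $T_{j+1}$ is sought, so for $\ell>j$ we have $\{x_{\ell},y_{\ell}\}\cap\{x_{j},y_{j}\}=\emptyset$, which forces $\{vx_{j},vy_{j},x_{j}y_{j}\}\cap\{vx_{\ell},vy_{\ell},x_{\ell}y_{\ell}\}=\emptyset$.

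The only real thing to check is that the color-degree condition of Theorem \ref{1} persists in $G_{j-1}$ for each $j\leq k$. Deleting a single vertex reduces the color-degree of any remaining vertex by at most $1$, so after removing $2(j-1)$ vertices we have
\[
\delta^{c}(G_{j-1})\;\geq\;\delta^{c}(G)-2(j-1)\;\geq\;\frac{n-1}{2}+k-2(j-1)\;=\;\frac{(n-2(j-1))+1}{2}+(k-j).
\]
Since $j\leq k$, the right-hand side is at least $\frac{|V(G_{j-1})|+1}{2}$, which is exactly the hypothesis of Theorem \ref{1} applied to $G_{j-1}$. Hence the construction can indeed be carried out for all $j=1,\ldots,k$, producing $k$ pairwise edge-disjoint rainbow triangles at $v$.

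There is really no main obstacle beyond the two-line arithmetic above; the slight care needed is in the bookkeeping of color-degree loss (at most one per deleted vertex, hence at most two per iteration, which is precisely matched by the slack $k$ built into the hypothesis). Sharpness of the bound is not part of the statement to be proved and would be handled separately by a suitable extremal construction along the lines of Construction \ref{2}.
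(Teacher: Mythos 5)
Your proof is correct and is exactly the argument the paper intends for Fact \ref{4}: iterate Theorem \ref{1}, deleting the two non-$v$ vertices of each found triangle, with the color-degree loss of at most $2$ per iteration absorbed by the slack $k$ in the hypothesis. The paper only sketches this in one sentence, and your bookkeeping (including the verification that $\delta^{c}(G_{j-1})\geq \frac{|V(G_{j-1})|+1}{2}$ for $j\leq k$) fills it in correctly.
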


In this paper, we will show the following result,
which can be seen as a generalization of Theorem \ref{1}.

\begin{theorem}\label{3}
Let $G$ be an edge-colored complete graph of order $n$.
If $\delta^c(G)\geq \frac{n+k}{2}$,
then every vertex of $G$ is contained in at least $k$ rainbow triangles.
\end{theorem}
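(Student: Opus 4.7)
The plan is to proceed by induction on $k$, using Theorem \ref{1} (Fujita--Magnant) as the base case $k=1$.

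For the inductive step, suppose the result is known for $k-1$. Given $G$ with $\delta^c(G)\geq (n+k)/2$ and any vertex $v\in V(G)$, the inductive hypothesis (applied using $\delta^c(G)\geq (n+k-1)/2$) yields $k-1$ distinct rainbow triangles through $v$, say $T_1,\ldots,T_{k-1}$.

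The main idea is then to delete exactly \emph{one} non-$v$ vertex from each $T_i$ and apply Theorem \ref{1} to the remaining graph. Choose $a_i\in V(T_i)\setminus\{v\}$ and set $W=\{a_1,\ldots,a_{k-1}\}$, so $|W|\leq k-1$. Let $G'=G-W$, a complete edge-colored graph on $n'=n-|W|$ vertices. Since removing $|W|$ vertices destroys at most $|W|$ colors at any surviving vertex,
\[
\delta^c(G') \;\geq\; \delta^c(G)-|W| \;\geq\; \frac{n+k}{2}-|W|,
\]
and an elementary rearrangement shows this is at least $(n'+1)/2=(n-|W|+1)/2$ exactly when $|W|\leq k-1$. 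Hence Theorem \ref{1} applies to $G'$ and furnishes a rainbow triangle $T$ through $v$ in $G'$. Because $V(T)\subseteq V(G)\setminus W$ while $a_i\in V(T_i)\cap W$ for every $i$, the triangle $T$ differs from each $T_i$, yielding $k$ distinct rainbow triangles at $v$.

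The principal subtlety, and essentially the only design choice in the argument, lies in calibrating how much to delete. Removing \emph{both} non-$v$ vertices from each previous triangle would force $|W|\leq 2(k-1)$, and the color-degree inequality then fails already for $k=2$; so a naive ``delete the rainbow triangles'' strategy does not extend Theorem \ref{1} past the base case. Deleting one vertex per triangle is exactly what is needed to match the hypothesis of Theorem \ref{1} in the reduced graph, which is precisely why the bound $(n+k)/2$ in the statement is the right one for this inductive scheme. No further structural analysis of the coloring is anticipated.
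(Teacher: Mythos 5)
Your proof is correct, and it is a genuinely different and much more elementary route than the paper's. The paper fixes a vertex $v$, encodes the rainbow triangles at $v$ by the edge set $R(v)$, orients $G[N_1]$ (the union of the singleton color-neighborhoods of $v$), and runs a contradiction argument combining the color restrictions $C(N_i(v),N_j(v))\setminus R(v)\subseteq\{i,j\}$ with an out-degree count in the orientation; the induction on $k$ there is only a framing device for the counting. You instead reduce the whole statement to an iterated application of Theorem \ref{1}: delete one non-$v$ vertex from each of the $k-1$ triangles supplied by the inductive hypothesis, check that the complete graph $G'=G-W$ still satisfies $\delta^c(G')\geq \delta^c(G)-|W|\geq \frac{(n-|W|)+1}{2}$ precisely because $|W|\leq k-1$, and extract a new triangle at $v$ that avoids $W$ and hence differs from all previous ones. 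All the steps check out: each deleted vertex costs at most one color at any surviving vertex, the inequality rearranges exactly as you say, $v\notin W$, and $|V(G')|\geq 3$ is automatic since the hypothesis forces $n\geq k+2$. Your aside about why deleting \emph{both} non-$v$ vertices fails is also accurate and explains the gap between this theorem and the paper's Fact \ref{4} (edge-disjoint triangles under the stronger bound $\frac{n-1}{2}+k$). What the two approaches buy: yours makes Theorem \ref{3} an almost immediate corollary of Fujita--Magnant and is easier to verify, while the paper's orientation-and-counting machinery produces structural information about the coloring around $v$ (the claims on $C(N_i(v),N_j(v))$) of the kind that is reused in the proofs of Theorems \ref{8} and \ref{10}; as a proof of the stated theorem alone, your argument suffices.
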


As for global property on rainbow triangle,
there are some results in an edge-colored general graph.

\begin{theorem}[\cite{L}] \label{15}
Let $G$ be an edge-colored graph of order $n$.
If $\delta^c(G)\geq \frac{n+1}{2}$,
then $G$ contains a rainbow triangle.
\end{theorem}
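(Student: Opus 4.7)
The plan is a proof by contradiction. Suppose $G$ has no rainbow triangle. Pick a vertex $v$ of maximum color-degree $d := d^c(v) \ge (n+1)/2$, and choose a \emph{rainbow neighborhood} $S = \{u_1, \ldots, u_d\} \subseteq N(v)$ with $c(vu_i) = c_i$ and the colors $c_1, \ldots, c_d$ pairwise distinct. Let $T = V(G) \setminus (S \cup \{v\})$, so $|T| \le (n-3)/2$. The starting observation is that each triangle $vu_iu_j$ (when it exists) already carries two distinct star colors $c_i, c_j$, so the no-rainbow-triangle hypothesis forces $c(u_iu_j) \in \{c_i, c_j\}$ for every edge $u_iu_j$ of $G[S]$.

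Next I would orient each such edge of $G[S]$ by setting $u_i \to u_j$ when $c(u_iu_j) = c_j$. Then for any $u_i \in S$, the star colors visible at $u_i$ through edges into $S \cup \{v\}$ are exactly $c_i$ (via $u_iv$) together with $c_j$ for each out-neighbor $u_j$ of $u_i$ in the orientation. All other colors in the palette of $u_i$ must come from edges to $T$, which yields
\[
d^c(u_i) \;\le\; 1 + d^+(u_i) + |N(u_i) \cap T|,
\]
so $d^+(u_i) \ge (n-1)/2 - |N(u_i)\cap T|$. The symmetric analysis at any $t \in T$ gives $|N(t)\cap S| \ge d^c(t) - |T| \ge 2$, so every vertex of $T$ has at least two neighbors in $S$, and applying the no-rainbow-triangle hypothesis to triangles $tu_iu_j$ (whenever $u_iu_j \in E(G[S])$) further restricts the colors of the $t$--$S$ edges in a three-choice manner analogous to the one inside $S$.

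I would then sum $d^+(u_i)$ over $u_i \in S$, obtaining $|E(G[S])| \ge d(n-1)/2 - |E(S,T)|$, and combine this with the trivial upper bound $|E(G[S])| \le d(d-1)/2$ and a sharper bound on $|E(S,T)|$. The maximality of $d^c(v) = d$ (which forces $d^c(u_i) \le d$ for each $u_i$) can be used to tighten one side of the inequality, while the two-sided color-degree constraint on the $T$-vertices, together with the rainbow-free constraint on triangles inside $S \cup T$, is used to control $|E(S,T)|$.

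The main obstacle is precisely this last sharpening: the naive counting just outlined yields only $d \le n-2$, which does not contradict $d \ge (n+1)/2$. Closing the gap requires using the no-rainbow-triangle hypothesis a \emph{second} time — on triangles supported entirely on $S \cup T$ — so that the at-least-two $S$-neighbors of each $t \in T$ cannot all be ``colored consistently'' with the orientation of $G[S]$ without violating some $d^c(\cdot) \ge (n+1)/2$ constraint. Balancing the contributions from $G[S]$, $E(S,T)$ and $G[T]$ in this way is the technical heart of the argument, and is what turns the slack inequality $d \le n-2$ into the desired contradiction.
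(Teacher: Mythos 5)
Your proposal is a program rather than a proof, and you say so yourself: the only inequalities you actually derive are $d^c(u_i)\le 1+d^+(u_i)+|N(u_i)\cap T|$, the summed bound $|E(G[S])|\ge d\frac{n-1}{2}-|E(S,T)|$, and $|N(t)\cap S|\ge 2$ for $t\in T$, and as you compute, these close only to $d\le n-2$, which is no contradiction. The step you defer to ``the technical heart'' --- using the rainbow-free hypothesis a second time on triangles inside $S\cup T$ to control $|E(S,T)|$ and $G[T]$ --- is precisely where all the difficulty of the theorem lives, and it is not a routine sharpening. The reason is visible in the extremal structure: a properly colored balanced complete bipartite graph (the extremal configuration in Theorem \ref{16}) has $\delta^c=\frac{n}{2}$, no triangles at all, $G[S]$ empty, and every inequality you write down holds with room to spare. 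So any argument along your lines must detect and exclude near-bipartite configurations, and nothing in your counting does that; since $G$ is not complete, the edges $u_iu_j$ and $tu_i$ you would need for the ``second use'' of the hypothesis may simply be absent, which is exactly what the bipartite example exploits. Note also that the maximality of $d^c(v)$ is invoked only vaguely and never enters a displayed inequality.

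For comparison: the statement you are proving is quoted from \cite{L} and this paper gives no proof of it, so there is nothing internal to compare against directly. Your setup does, however, mirror the orientation device the paper uses in the proof of Theorem \ref{3} (orienting edges inside the singleton color classes and bounding $d^c_{N_1\cup\{v\}}(u)\le d^+_D(u)+1$, together with $C(N_i(v),N_j(v))\subseteq\{i,j\}$). There the argument closes because the graph is complete, so every cross pair is an edge and the color restriction applies globally; in the general-graph setting of Theorem \ref{15} that crutch is gone, and your sketch as written does not replace it.
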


\begin{theorem}[\cite{LNXZ}]\label{16}
Let $G$ be an edge-colored graph of order $n$.
If $\delta^c(G)\geq \frac{n}{2}$ and $G$ contains no rainbow triangles,
then $n$ is even and $G$ is the complete bipartite graph $K_{\frac{n}{2},\frac{n}{2}}$, unless $G=K_4-e$ or $K_4$  when $n=4$.
\end{theorem}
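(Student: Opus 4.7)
I would organize the proof into three stages.

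\emph{Parity and a base case.} First, $n$ must be even: if $n$ were odd, integrality of colour-degrees would force $\delta^c(G)\ge\lceil n/2\rceil=(n+1)/2$, so Theorem~\ref{15} would produce a rainbow triangle, contradicting the hypothesis. For the base case $n=4$ the hypothesis reads $\delta^c(G)\ge 2$, and a direct case analysis of edge-coloured graphs on four vertices with minimum degree at least two isolates exactly the three rainbow-triangle-free examples $K_{2,2}$ (with any rainbow colouring), $K_4-e$, and $K_4$ (the latter two with suitable two-colourings, e.g.\ for $K_4$ take one perfect matching in one colour and the remaining four-cycle in another). This produces the exceptional cases.

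\emph{No triangle when $n\ge 6$.} This is the heart of the argument. Suppose for contradiction that $uvw$ is a triangle in $G$; as it is not rainbow, two of its edges share a colour, say $c(uv)=c(uw)=\alpha$. For each $x\in V(G)\setminus\{u,v,w\}$ I would record the colours on whichever of the edges $xu,xv,xw$ are present in $G$, and use the fact that any triangle on three of $\{u,v,w,x\}$ carries at most two colours to restrict these local patterns sharply: e.g.\ if $xu,xv,xw$ are all present, then the colour triple $(c(xu),c(xv),c(xw))$ must be compatible with the three ambient triangles through $u,v,w$. Combining these local restrictions with the fact that $v$ and $w$ each see at least $n/2$ distinct colours among their at most $n-1$ incident edges — so that most colour classes at $v$ and $w$ have size one or two — I would either construct a rainbow triangle through one of $u,v,w$, contradicting the hypothesis, or exhaust the structure to force $n\le 4$, contradicting $n\ge 6$. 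If the first pass leaves ambiguity, one iterates by selecting a second triangle through $v$ (which exists because $d(v)\ge n/2$ is large).

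\emph{Bipartite conclusion and main obstacle.} Once $G$ is triangle-free, the Andr\'asfai--Erd\H{o}s--S\'os theorem applies: any triangle-free graph on $n$ vertices with $\delta(G)>2n/5$ is bipartite. Since $\delta(G)\ge\delta^c(G)\ge n/2>2n/5$, $G$ is bipartite with parts $A,B$; the colour-degree bound immediately forces $|A|,|B|\ge n/2$, hence $|A|=|B|=n/2$, and each vertex of $A$ is joined to every vertex of $B$, giving $G=K_{n/2,n/2}$. The main obstacle is squarely the triangle-freeness step: the ``no rainbow triangle'' constraint is purely local (one triangle at a time), while $\delta^c(G)\ge n/2$ is global and exactly at the threshold — we cannot upgrade to $(n+1)/2$ and cite Theorem~\ref{15} directly — so a tight averaging/counting argument over the neighborhoods of $v$ and $w$, exploiting repetitions of the colour $\alpha$, appears essential.
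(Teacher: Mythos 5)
This statement is quoted from \cite{LNXZ} (it is Theorem~\ref{16}, cited, not proved, in the present paper), so there is no in-paper proof to compare against; judging your proposal on its own terms, it has a genuine gap. The entire difficulty of the theorem is concentrated in your second stage, ``no triangle when $n\ge 6$,'' and there you do not give an argument: you describe an intention to record the colours of $xu,xv,xw$, to ``restrict these local patterns sharply,'' and then to ``either construct a rainbow triangle \dots or exhaust the structure to force $n\le 4$,'' and you concede at the end that the required tight counting argument over the neighbourhoods of $v$ and $w$ is still to be found. That is precisely the step that cannot be waved through: the hypothesis $\delta^c(G)\ge n/2$ sits exactly at the threshold, Theorem~\ref{15} is unavailable, and the exceptional colourings of $K_4$ and $K_4-e$ show that triangles genuinely can coexist with the colour-degree bound, so any proof of triangle-freeness must use $n\ge 6$ in an essential quantitative way. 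As written, the sketch could not be completed without supplying a new idea, so the proposal does not constitute a proof. (The $n=4$ ``direct case analysis'' is likewise only asserted, though that part is routine.)

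The outer stages are fine, and the last one can even be simplified. The parity step is correct: for odd $n$ integrality gives $\delta^c(G)\ge\frac{n+1}{2}$ and Theorem~\ref{15} yields a rainbow triangle. And once triangle-freeness is known you do not need the Andr\'asfai--Erd\H{o}s--S\'os theorem: since $\delta(G)\ge\delta^c(G)\ge \frac{n}{2}$, for any edge $uv$ the sets $N(u)$ and $N(v)$ are disjoint (no triangle on $uv$), each has size at least $\frac{n}{2}$, hence they partition $V(G)$ into two independent sets of size exactly $\frac{n}{2}$, and the degree condition forces every vertex of one side to be joined to all of the other side, i.e.\ $G=K_{\frac{n}{2},\frac{n}{2}}$. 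So the decisive missing ingredient remains the exclusion of triangles for $n\ge 6$, which is the heart of the theorem in \cite{LNXZ}.
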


Recently, Hu et al. proved the following result in an edge-colored general graph.

\begin{theorem}[\cite{HLY}]\label{12}
Let $G$ be an edge-colored graph of order $n\geq 20$.
If $\delta^c(G)\geq \frac{n+2}{2}$,
then $G$ contains two vertex-disjoint rainbow triangles.
\end{theorem}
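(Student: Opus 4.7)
The plan is to produce the two rainbow triangles sequentially. First I would apply Theorem \ref{15}, whose hypothesis $\delta^c(G)\geq\frac{n+1}{2}$ is strictly weaker than the present $\frac{n+2}{2}$, to obtain a rainbow triangle $T_1=abc$ in $G$. Setting $G':=G-V(T_1)$ and $n':=n-3$, I would then search for a rainbow triangle inside $G'$; any such triangle is automatically vertex-disjoint from $T_1$. The naive color-degree estimate in $G'$ gives, for every $v\in V(G')$,
\[
d^c_{G'}(v)\;\geq\;d^c_G(v)-|\{c(va),c(vb),c(vc)\}|\;\geq\;\frac{n+2}{2}-3\;=\;\frac{n'-1}{2},
\]
which falls one short of the $\frac{n'+1}{2}$ threshold of Theorem \ref{15} and half short of the $\frac{n'}{2}$ threshold of Theorem \ref{16}.

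The hard part will be bridging this small gap, and this is where the size assumption $n\geq 20$ enters. My strategy is to select $T_1$ so as to avoid the worst-case loss of three distinct colors at any vertex of $V(G')$. Call $v\in V(G')$ \emph{triple-lost} with respect to $T_1$ if the three colors $c(va),c(vb),c(vc)$ are pairwise distinct and none of them is repeated on another edge of $v$. A triple-lost vertex $v$ immediately supplies the three alternative rainbow triangles $\{v,a,b\}$, $\{v,a,c\}$, and $\{v,b,c\}$, any of which either directly yields the required second triangle (if it is vertex-disjoint from some rainbow triangle located elsewhere in $G$) or serves as a replacement for $T_1$. Iterating this swap, and appealing to the abundance of rainbow triangles through every vertex guaranteed by Theorem \ref{3} at $k=1$ once the graph is complete-like, should terminate at a rainbow triangle $T_1^{\star}$ admitting no triple-lost vertex in its complement, so that $\delta^c(G-V(T_1^{\star}))\geq\frac{n'}{2}$ and Theorem \ref{16} applies inside $G-V(T_1^{\star})$.

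Theorem \ref{16} then either supplies a rainbow triangle in $G-V(T_1^{\star})$, finishing the proof, or forces $G-V(T_1^{\star})\cong K_{n'/2,n'/2}$ (the small exceptional graphs $K_4$ and $K_4-e$ being excluded by $n\geq 20$). In the bipartite case, let $X,Y$ be the two parts: no edge lies inside $X$ or inside $Y$, yet each $w\in V(T_1^{\star})$ still has color-degree at least $\frac{n+2}{2}$ in $G$ and therefore spreads many distinct colors over its edges into $X\cup Y$. A pigeonhole comparison among these three color palettes, combined with the fact that the colors along any perfect matching between $X$ and $Y$ must be pairwise distinct (else a rainbow triangle would already exist in $G-V(T_1^{\star})$), should produce either a rainbow triangle fully inside $G-V(T_1^{\star})$ (contradicting bipartiteness) or a rainbow triangle on one vertex of $V(T_1^{\star})$ and two of $V(G')$ that, after rotating which of $a,b,c$ is distinguished, yields a second rainbow triangle vertex-disjoint from $T_1^{\star}$. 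The hypothesis $n\geq 20$ is used throughout this final pigeonhole to keep $|X|=|Y|=\frac{n'}{2}$ comfortably large; the forthcoming improvement to $n\geq 7$ presumably tightens precisely these estimates while leaving the overall plan intact.
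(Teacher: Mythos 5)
There is a genuine gap, and it sits at the heart of your plan. Your ``triple-lost'' device does not do what you claim: if $v$ sees three pairwise distinct colors $c(va),c(vb),c(vc)$ on its edges to the triangle $T_1=abc$, the triangles $vab$, $vac$, $vbc$ are \emph{not} automatically rainbow, because nothing prevents $c(ab)$ from coinciding with $c(va)$ or $c(vb)$ (and similarly for the other two); being rainbow is a property of all three edges of the triangle, and the rainbowness of $abc$ only says $c(ab),c(bc),c(ca)$ are pairwise distinct, not that they avoid the colors at $v$. With that claim gone, the whole swapping scheme has no engine: there is no argument that the iteration terminates, no invariant that improves, and no reason a rainbow triangle $T_1^{\star}$ with no triple-lost vertex in its complement exists at all. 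Two further problems: Theorem \ref{3}, which you invoke to guarantee ``abundance of rainbow triangles through every vertex,'' requires $G$ to be complete, whereas Theorem \ref{12} is about general edge-colored graphs; and the final bipartite case is only a sketch (``should produce''), with no actual pigeonhole argument, so even the endgame is unproved.

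The structural reason your route falls short is that you delete all three vertices of $T_1$, which costs up to $3$ in color-degree and leaves only $\delta^c\geq\frac{n'-1}{2}$, below every usable threshold. The paper (in proving the stronger Theorem \ref{14}, and the complete-graph analogue Theorem \ref{11}) never deletes three vertices: it fixes one rainbow triangle $xyz$, studies which of $x,y,z$ the other rainbow triangles meet (the sets $W_1,W_2,W_3$), shows that either some pair $\{y,z\}$ or $\{x,a_0\}$ (for a suitable common vertex $a_0$) hits all rainbow triangles, and then deletes only \emph{two} vertices. The drop of $2$ keeps $\delta^c$ of the remainder at $\frac{|G'|}{2}$, so Theorem \ref{16} gives the exact extremal structure (a properly colored balanced complete bipartite graph), and Lemma \ref{l2} extracts two disjoint rainbow triangles through the two deleted vertices from that structure. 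If you want to salvage your plan, you would need either a correct replacement for the triple-lost step or, more naturally, to switch to the two-vertex-deletion strategy, which is exactly what makes the degree arithmetic close.
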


In this paper, we are seeking for sufficient condition
for the existence of rainbow triangles in an edge-colored complete graph.

\begin{theorem}\label{8}
Let $G$ be an edge-colored complete graph of order $n$.
If $\delta^c(G)\geq \frac{n}{2}$,
then $G$ contains a rainbow triangle.
\end{theorem}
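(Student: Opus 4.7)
My plan is to deduce Theorem~\ref{8} essentially as a corollary of Theorem~\ref{16}, handling the parities of $n$ separately. If $n$ is odd, the hypothesis $\delta^c(G)\ge n/2$ upgrades automatically (since color-degrees are integers) to $\delta^c(G)\ge (n+1)/2$, and Theorem~\ref{15} applied to $G$ already produces a rainbow triangle. So I only need to deal with even $n$.

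For even $n$, I argue by contradiction. Suppose $G$ is an edge-colored complete graph of order $n$ with $\delta^c(G)\ge n/2$ but containing no rainbow triangle. Then the hypotheses of Theorem~\ref{16} are satisfied, and that theorem forces $G$ to be one of a very short list: $G\cong K_{n/2,n/2}$, or $n=4$ and $G\in\{K_4-e,K_4\}$. I would then appeal to completeness to eliminate each possibility in turn. The complete bipartite graph $K_{n/2,n/2}$ has no edge inside either side and is therefore not equal to $K_n$ for any $n\ge 2$; the graph $K_4-e$ is obviously missing an edge; and the only residual possibility $G=K_4$ can be either ruled out by the implicit convention $n\ge 5$ or dispatched by direct inspection of the four triangles of $K_4$.

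The main point is that no fresh combinatorics is needed: the structural content of the argument has already been packaged into Theorem~\ref{16}, and the completeness hypothesis immediately discards the only infinite family of exceptions, namely $K_{n/2,n/2}$. The only genuine obstacle is the $n=4$ corner, because $K_4$ does admit a two-coloring whose color classes are $4$-vertex paths, achieving $\delta^c=2=n/2$ with no rainbow triangle. Accordingly, Theorem~\ref{8} is most naturally read under the implicit hypothesis $n\ge 5$, after which the reduction above closes the proof in a few lines. Sharpness of the $n/2$ threshold for complete graphs (asserted in the abstract) would then be confirmed by a separate construction of an edge-colored $K_n$ with $\delta^c=n/2-1$ and no rainbow triangle, a task independent of the main reduction.
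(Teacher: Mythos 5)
Your proposal is correct in substance but takes a genuinely different route from the paper. You derive the theorem from the cited structure theorem of Li, Ning, Xu and Zhang (Theorem \ref{16}): for odd $n$ integrality upgrades the hypothesis to $\delta^c(G)\ge\frac{n+1}{2}$ and Theorem \ref{15} finishes, while for even $n$ the only rainbow-triangle-free graphs with $\delta^c\ge\frac{n}{2}$ are $K_{\frac{n}{2},\frac{n}{2}}$ and the $n=4$ exceptions, which completeness eliminates except at $n=4$. Your observation there is accurate and worth stressing: $K_4$ two-colored so that each color class is a Hamiltonian path has $\delta^c=2=\frac{n}{2}$ and no rainbow triangle, so the statement read literally fails at $n=4$ (and vacuously at $n=2$, where $K_{1,1}=K_2$), and your implicit restriction to $n\ge 5$ is genuinely needed; notably the paper's own proof quietly assumes $n\ge 8$ in its Case 2, so small orders are not fully treated there either. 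The paper, by contrast, argues directly and self-containedly: fix a vertex $v$, split its color classes $N_i(v)$ into the singleton classes $N_1$ and the larger classes $N_2$, show that edges between distinct classes only carry the two class colors (Claim \ref{c4}), locate in $N_1$ a vertex all of whose edges inside $N_1$ carry its own color (Claim \ref{c5}), and reach a color-degree contradiction in the two cases $|N_1|\ge 2$ and $|N_1|=1$. What your reduction buys is brevity, at the price of resting entirely on the external result in \cite{LNXZ}; what the paper's direct argument buys is independence from that result and, more importantly, the machinery that is reused almost verbatim to prove the stronger structural Theorem \ref{10} at the threshold $\frac{n-1}{2}$, which cannot be obtained from Theorem \ref{16}.
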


With more effort, we can obtain the following stronger theorem.

\begin{theorem}\label{10}
Let $G$ be an edge-colored complete graph of order $n$.
If $\delta^c(G)\geq \frac{n-1}{2}$ and $G$ contains no rainbow triangle,
then $V(G)$ can be partitioned into $\frac{n+1}{2}$ parts $\{A_0,A_1,\cdots, A_{\frac{n-1}{2}}\}$ (see Figure \ref{fig2}),
such that the following properties hold:

(1) $n$ is odd and $d^c(v)=\frac{n-1}{2}$ for all $v\in V(G)$;

(2) $|A_0|=1$ and $|A_i|=2$ for $1\leq i\leq \frac{n-1}{2}$;

(3) for any vertex $u\in A_i$, $C(u,A_i)= \{i,j\}$ for $1\leq i\neq j\leq \frac{n-1}{2}$;

(4) if $\frac{n-1}{2}\leq 2$, then $C(A_i)\in \{1,2\}$, $i=1,2$;
if $\frac{n-1}{2}\geq 3$, then $C(A_i)=\{i\}$, $1\leq i\leq \frac{n-1}{2}$.
\end{theorem}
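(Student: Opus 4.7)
The plan is to establish the four properties in succession, each leveraging the previous. First, by the contrapositive of Theorem \ref{8}, the absence of rainbow triangles in $G$ forces $\delta^c(G) < n/2$; combined with the hypothesis $\delta^c(G) \geq (n-1)/2$ and integrality of $\delta^c$, this yields that $n$ is odd and $\delta^c(G) = (n-1)/2$. Write $k = (n-1)/2$, so $n = 2k+1$.

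To upgrade $\delta^c = k$ to $d^c(v) = k$ for every $v$, I would argue by contradiction: suppose some $v^*$ has $d^c(v^*) \geq k+1$. Since $|N(v^*)| = 2k$, pigeonhole produces at least two singleton color classes at $v^*$, say $N_1(v^*) = \{u_1\}$ and $N_2(v^*) = \{u_2\}$, and avoiding rainbow $v^* u_1 u_2$ forces $c(u_1 u_2) \in \{1,2\}$, say equal to $1$. Every edge at $u_1$ then takes its color from $\{1,\dots,d^c(v^*)\}$ (each edge from $u_1$ to $N_j(v^*)$ lies in $\{1,j\}$ by avoiding rainbow $v^* u_1 w$, while $N_1(v^*)=\{u_1\}$ contributes no in-class edges), and color $2$ is absent at $u_1$ (its only neighbor in $N_2(v^*)$ is $u_2$, with $c(u_1 u_2)=1$). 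Hence $d^c(u_1) \leq d^c(v^*)-1$, and then $d^c(u_1)=k$ by $\delta^c \geq k$. Consequently, for every $j \in \{3,\dots,d^c(v^*)\}$ some $w_j \in N_j(v^*)$ has $c(u_1 w_j)=j$, and avoiding rainbow $u_1 u_2 w_j$ forces $c(u_2 w_j)=j$. A second round of the singleton argument applied at $u_2$, whose own singleton classes must include $\{u_1\}$ together with either $\{v^*\}$ or some $\{w_j\}$ depending on the edges $u_2 w_j'$, either immediately bounds $d^c(v^*) \leq k$ or, through the triangles $u_1 w_j w_j'$ and $u_2 w_j w_j'$, forces the internal edge $c(w_j w_j')=j$; this in turn removes color $2$ from the palette of $w_j$, contradicting $d^c(w_j) = k$. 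This yields property (1).

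For (2), I would take $v_0$ to have all color classes of size exactly $2$ — such $v_0$ exists because the class sizes at any vertex sum to $2k$ over $k$ classes, and in the only alternative configuration (some class of size $\geq 3$ together with a compensating singleton) the same singleton-vertex analysis either produces a rainbow triangle or identifies a replacement whose classes are all doubletons. Set $A_0 = \{v_0\}$ and $A_i = N_i(v_0)$ for $i = 1,\dots,k$. Finally, for (3) and (4), the initial constraint is that for $u \in A_i$ and $w \in A_j$ with $i \neq j \geq 1$, avoiding rainbow $v_0 u w$ forces $c(uw) \in \{i,j\}$, so $C(u,A_j) \subseteq \{i,j\}$. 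To achieve equality and pin down the color $c(uu_i')$ of the edge joining $u$ to its partner $u_i'$ in $A_i$, I use $d^c(u) = k$: the colors at $u$ lie in $\{1,\dots,k\} \cup \{c(uu_i')\}$, so hitting exactly $k$ distinct colors forces a tight balance on how many of the edges from $u$ to $A_j$ carry color $i$ versus $j$ and constrains $c(uu_i')$ to $\{1,\dots,k\}$. For $k \geq 3$ a third doubleton $A_\ell$ acts as a rigidifying witness through triangles $u u_i' x$ with $x \in A_\ell$, yielding $c(uu_i') = i$ and $C(u,A_j) = \{i,j\}$; for $k \leq 2$ the available flexibility permits only $c(uu_i') \in \{1,2\}$. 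The main obstacle throughout is the layered rainbow-avoidance bookkeeping in the proof of (1) and of the existence of $v_0$, which must be threaded through $v^*, u_1, u_2, w_j, w_j'$ across several overlapping subcases before a contradictory rainbow triangle emerges.
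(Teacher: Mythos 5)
Your plan diverges from the paper's: the paper never proves property (1) up front; it fixes a vertex $v$ of minimum color-degree $\frac{n-1}{2}$, shows its singleton color classes are empty (Claim \ref{c8}), and then reads off (2)--(4), after which (1) follows from the structure. Your attempt to establish (1) first, by contradiction at a vertex $v^*$ with $d^c(v^*)\geq k+1$, has a genuine gap. First, a smaller issue: your deduction that every color $j\in\{3,\dots,d^c(v^*)\}$ appears at $u_1$ needs $d^c(u_1)=d^c(v^*)-1$, which your bound gives only when $d^c(v^*)=k+1$ (and you also tacitly assume all non-singleton classes are doubletons); the case $d^c(v^*)\geq k+2$ is not covered. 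The serious problem is the ``second round'' at $u_2$: in the branch where $u_2$'s second singleton class is $\{v^*\}$, i.e.\ $c(u_2w_j')=j$ for all $j$, nothing in the data you have collected ``immediately bounds $d^c(v^*)\leq k$'' --- the configuration around $v^*,u_1,u_2$ is then perfectly symmetric and consistent. Indeed, for $n=5$ it is realizable: color $K_5$ on $\{v,u_1,u_2,w,w'\}$ by $c(vu_1)=c(u_1u_2)=1$, $c(vu_2)=2$, $c(ww')=4$ and all remaining edges $3$; there is no rainbow triangle and $\delta^c=2=\frac{n-1}{2}$, yet $d^c(v)=3$. So this branch cannot be closed by your local argument alone; one needs $n\geq 7$ together with a further layer (e.g.\ $d^c(w_j)\geq k$ forces $c(w_jw_j')\notin\{3,\dots,k+1\}$ and that every color $l$ appears on $E(w_j,N_l(v^*))$, and then triangles between two doubleton classes force contradictory colors on $w_jw_l$), none of which appears in your sketch. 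Even in the branch you do treat, the contradiction is not that ``color $2$ is removed'': once $c(w_jw_j')=j$, the whole palette of $w_j$ shrinks to $\{3,\dots,k+1\}$ (colors $1$ and $2$ both absent and the partner edge contributes nothing new), so $d^c(w_j)\leq k-1$; and ``$d^c(w_j)=k$'' is something you must derive, not assume.

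The second gap is the existence of $v_0$ with all classes of size exactly $2$. This is precisely the paper's Claim \ref{c8}, and it is not a routine consequence of your earlier analysis: there the hub had color-degree $k+1$, whereas here one must rule out a degree-$k$ vertex having a singleton class together with a class of size at least $3$, which the paper handles by a dedicated two-case argument (the $|N_1|\geq 2$ case via a vertex $u$ with $C(u,N_1\setminus\{u\})=\{c(vu)\}$, and the $|N_1|=1$ case via explicit vertices $x,y,z$ inside and outside the big class). Your sentence ``the same singleton-vertex analysis either produces a rainbow triangle or identifies a replacement whose classes are all doubletons'' asserts rather than proves this step. The derivations of (3) and (4) are also only gestured at (``tight balance'', ``rigidifying witness''), though they are close in spirit to the paper's short arguments (use $d^c(u)\geq\frac{n-1}{2}$ to force $j\in C(u,A_j)$ for every $j\neq i$, and for $\frac{n-1}{2}\geq 3$ use a vertex of a third class to force $C(A_i)=\{i\}$). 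I would recommend restructuring along the paper's lines --- minimum-degree vertex first, then $N_1=\emptyset$, then (3)--(4), with (1) as a corollary --- which avoids the high-degree-vertex analysis that your current draft leaves incomplete.
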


\begin{figure}[htbp]
  \centering
 \scalebox{1}{\includegraphics[width=2.2in,height=1.8in]{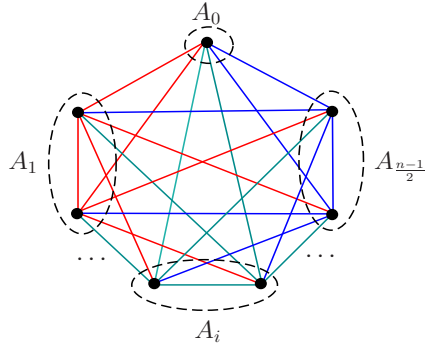}}\\
 \captionsetup{font={scriptsize}}
  \caption{The structure of $G$ in Theorem \ref{10}}\label{fig2}
\end{figure}

Theorem \ref{10} shows that the lower bound on $\delta^c(G)$ in Theorem \ref{8}
is tight.
Using Theorem \ref{8}, and repeatedly deleting the vertices
of rainbow triangles
it is easy to obtain the following sufficient condition for
the existence of $k$ vertex-disjoint rainbow triangles.

\begin{fact}\label{13}
Let $G$ be an edge-colored complete graph of order $n$.
If $\delta^c(G)\geq \frac{n-3+3k}{2}$,
then $G$ has at least $k$ vertex-disjoint rainbow triangles.
\end{fact}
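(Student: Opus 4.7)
The plan is a straightforward induction on $k$, using Theorem \ref{8} both as the base case and as the engine of the inductive step, exactly as the authors hint in the sentence preceding the statement.

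For the base case $k=1$ the hypothesis reads $\delta^c(G)\geq \frac{n}{2}$, which is precisely the hypothesis of Theorem \ref{8}, so a single rainbow triangle exists. For the inductive step, I would assume the statement for $k-1$ and let $G$ be a complete graph of order $n$ with $\delta^c(G)\geq \frac{n-3+3k}{2}$. Since $k\geq 1$ this bound is at least $\frac{n}{2}$, so Theorem \ref{8} supplies a rainbow triangle $T$ on some vertex set $\{x,y,z\}$. I would then pass to $G'=G-\{x,y,z\}$, which is again complete and of order $n'=n-3$. For any surviving vertex $v$, the three deleted edges $vx,vy,vz$ each contribute at most one color at $v$, so $d^c_{G'}(v)\geq d^c_G(v)-3$. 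Hence
\[
\delta^c(G')\;\geq\;\delta^c(G)-3\;\geq\;\frac{n-3+3k}{2}-3\;=\;\frac{(n-3)-3+3(k-1)}{2}\;=\;\frac{n'-3+3(k-1)}{2},
\]
which is exactly the hypothesis of the inductive statement applied to $G'$ with parameter $k-1$. That produces $k-1$ vertex-disjoint rainbow triangles inside $G'$, and together with $T$ these form $k$ vertex-disjoint rainbow triangles in $G$.

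There is essentially no obstacle beyond the arithmetic displayed above; the only subtlety is the color-degree accounting after vertex deletion, and this is immediate because removing three vertices strips at most three incident edges, hence at most three distinct colors, from every surviving vertex. One should check that the induction is well posed in the sense that $n'\geq 3$ and the new hypothesis is numerically consistent, but this is automatic: as soon as $k\geq 2$ the inequality $\delta^c(G)\geq \frac{n-3+3k}{2}$ forces $n$ large enough that $n-3$ still accommodates a rainbow triangle by Theorem \ref{8}. Sharpness is not claimed in the statement, so no further work is needed.
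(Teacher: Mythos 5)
Your induction-by-deletion argument is exactly the proof the paper intends: the authors state Fact \ref{13} as an immediate consequence of Theorem \ref{8} obtained by ``repeatedly deleting the vertices of rainbow triangles,'' and your color-degree accounting ($\delta^c$ drops by at most $3$ per deleted triangle, matching the parameter shift from $k$ to $k-1$) is the correct and complete way to carry that out. No gap beyond the trivial degenerate cases already implicit in the paper's own statement.
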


The lower bound on $\delta^c(G)$ is far from tight.
We will investigate the minimum color-degree condition that
guarantees the existence of two vertex-disjoint rainbow triangles in an edge-colored complete graph.

\begin{theorem}\label{11}
Let $G$ be an edge-colored complete graph of order $n$.
If $n\geq 8$ and $\delta^c(G)\geq \frac{n+1}{2}$,
then $G$ contains two vertex-disjoint rainbow triangles,
and the bound $n\geq 8$ cannot be improved (see Figure \ref{fig3}).
\end{theorem}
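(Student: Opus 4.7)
The plan is to argue by contradiction. Assume $G$ has no two vertex-disjoint rainbow triangles. By Theorem~\ref{8} pick a rainbow triangle $T_1=\{x,y,z\}$ and set $G':=G-V(T_1)$. Since any hypothetical second triangle must share a vertex with $T_1$, the graph $G'$ is rainbow-triangle-free, and $\delta^c(G')\geq\delta^c(G)-3\geq(n-5)/2$. The argument then splits on the parity of $n$.

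\emph{Even $n$.} Here $|V(G')|=n-3$ is odd, $\delta^c(G)\geq(n+2)/2$ by integrality, and $\delta^c(G')\geq(n-4)/2=(|V(G')|-1)/2$. Theorem~\ref{10} applies to $G'$ and yields the rigid partition $V(G')=A_0\cup A_1\cup\cdots\cup A_m$, where $m=(n-4)/2$, $|A_0|=1$, $|A_i|=2$, $d^c_{G'}(u)=m$ for every $u$, and the palette of $G'$ equals $\{1,\ldots,m\}$. Comparing $d^c_G(u)\geq m+3$ with $d^c_{G'}(u)=m$ forces the three edges $ux,uy,uz$ to carry three pairwise distinct new colors (outside $\{1,\ldots,m\}$) for every $u\in V(G')$. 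Call a pair $(w,A_i)$ with $w\in\{x,y,z\}$ and $A_i=\{u_i,u_i'\}$ \emph{good} if $c(wu_i)\neq c(wu_i')$; then $wu_iu_i'$ is rainbow (the edge $u_iu_i'$ has color $i$, while the two $w$-edges have distinct new colors). Two good pairs $(w_1,A_{i_1}),(w_2,A_{i_2})$ with $w_1\neq w_2$ and $i_1\neq i_2$ would produce two vertex-disjoint rainbow triangles, contradicting the standing assumption; hence all good pairs share a common $w$ or a common $i$. In the extremal sub-case where no pair is good, setting $N_w=\{c(wu):u\in V(G')\}$, a straightforward count gives $|N_w|=m+1$ for each $w\in\{x,y,z\}$ and each edge $ww'$ of $T_1$ has color outside $N_w\cup N_{w'}$. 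These constraints make both $xyv_0$ (where $A_0=\{v_0\}$) and $u_1u_2z$ rainbow, and for $m\geq 2$ they are vertex-disjoint, yielding the required contradiction. The remaining sub-cases, where some pair is good, are handled analogously by pairing the rainbow triangle $wu_iu_i'$ from a good pair with a rainbow triangle of the form $yzu$ or $u_ju_kw'$ using parts and vertices disjoint from the first.

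\emph{Odd $n$.} Here $|V(G')|=n-3$ is even. By Theorem~\ref{10}, if $\delta^c(G')\geq(n-4)/2$ then $|V(G')|$ must be odd, a contradiction; hence $\delta^c(G')<(n-4)/2$ and, by integrality, $\delta^c(G')=(n-5)/2$. Some $v\in V(G')$ achieves $d^c_{G'}(v)=(n-5)/2$, so the three edges $vx,vy,vz$ have distinct colors each unique at $v$ in $G$. I would then apply Theorem~\ref{10} to the three ``thicker'' subgraphs $G-\{y,z\},\ G-\{x,z\},\ G-\{x,y\}$ (each of odd order $n-2$ with $\delta^c\geq(n-3)/2=(|V|-1)/2$). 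Any rainbow triangle of $G-\{y,z\}$ avoiding $x$ would lie in $G'$, contradicting $G'$'s rainbow-triangle-freeness; so either $G-\{y,z\}$ is rainbow-triangle-free (and Theorem~\ref{10} describes it) or every rainbow triangle there passes through $x$, and symmetric statements hold for the other two reductions. Combining these structural outcomes with the local rigidity at $v$ produces two vertex-disjoint rainbow triangles and the desired contradiction.

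The main obstacle is completing the sub-case analysis in the even case when some good pair exists, and managing the three-way structural analysis in the odd case where Theorem~\ref{10} cannot be applied to $G'$ directly. The hypothesis $n\geq 8$ enters in the even case to ensure $m=(n-4)/2\geq 2$, so that at least two non-trivial parts $A_i$ exist for assembling disjoint rainbow triangles; its sharpness is demonstrated by the edge-colored $K_7$ in Figure~\ref{fig3}.
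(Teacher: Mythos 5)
Your even-$n$ half is a genuinely different route from the paper (the paper never deletes all three triangle vertices; it works with the sets $W_1,W_2,W_3$ of vertices forming rainbow triangles with $x,y,z$ and a reduction lemma for $G$ minus two vertices), and it is essentially sound: for even $n$ the integrality bonus gives $\delta^c(G-\{x,y,z\})\geq \frac{(n-3)-1}{2}$, Theorem~\ref{10} applies, every $u\in V(G')$ sees three distinct new colors on $ux,uy,uz$, and your ``good pair'' dichotomy plus the counting $|N_w|=m+1$ does produce two disjoint triangles; the sub-cases you leave as ``analogous'' (all good pairs sharing a common part, or all three of $(x,A_1),(y,A_1),(z,A_1)$ good) need short extra counting arguments of the same flavour, but they go through.

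The genuine gap is the odd-$n$ half. There the parity bonus disappears: $\delta^c(G')$ can equal $\frac{n-5}{2}<\frac{|V(G')|-1}{2}$, so Theorem~\ref{10} gives you nothing about $G'$, and all you extract is rigidity at a single minimum-degree vertex $v$. Your plan then applies Theorem~\ref{10} to $G-\{y,z\}$, $G-\{x,z\}$, $G-\{x,y\}$, but in the critical scenario each of these three graphs \emph{does} contain rainbow triangles (all passing through the omitted third vertex of $T_1$), so the structure theorem applies to none of them, and the concluding sentence ``combining these structural outcomes with the local rigidity at $v$ produces two vertex-disjoint rainbow triangles'' is an unsupported assertion rather than an argument — and this is precisely the hard case of the theorem. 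The paper resolves it with ideas absent from your outline: it shows each $W_i$ contains a vertex outside the other two, deduces that all rainbow triangles other than $xyzx$ pass through one common vertex $a_0\notin\{x,y,z\}$, and then deletes the \emph{mixed} pair $\{x,a_0\}$ (not a pair inside the original triangle) so that Theorem~\ref{10} applies to $G-\{x,a_0\}$ and two disjoint rainbow triangles through $x$ and $a_0$ can be built (Lemma~\ref{l1}). Note also that even in your ``nice'' branch, where some $G-\{w_1,w_2\}$ is rainbow-triangle-free, you still owe the construction of two disjoint rainbow triangles through $w_1$ and $w_2$ from the rigid structure (the content of Lemma~\ref{l1}); that part is close to your even-case analysis, but the common-vertex argument and the deletion of a pair not contained in $\{x,y,z\}$ are missing ideas without which your odd case does not close.
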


\begin{figure}[htbp]
  \centering
 \scalebox{1}{\includegraphics[width=1.5in,height=1.3in]{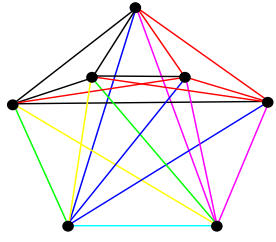}}\\
 \captionsetup{font={scriptsize}}
  \caption{A counterexample for $n=7$ in Theorem \ref{11}}\label{fig3}
\end{figure}

We will also improve the result of Theorem \ref{12} as follows.

\begin{theorem}\label{14}
Let $G$ be an edge-colored graph of order $n$.
If $n\geq 7$ and $\delta^c(G)\geq \frac{n+2}{2}$,
then $G$ contains two vertex-disjoint rainbow triangles, and
the bound $n\geq 7$ cannot be improved (see Figure \ref{fig4}).
\end{theorem}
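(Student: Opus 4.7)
The plan is a proof by contradiction. Assume $G$ has no two vertex-disjoint rainbow triangles. Since $\delta^c(G)\ge(n+2)/2>(n+1)/2$, Theorem \ref{15} produces a rainbow triangle $T=x_1x_2x_3$; set $G'=G-V(T)$, so by assumption $G'$ itself contains no rainbow triangle. The crude bound $d^c_{G'}(v)\ge d^c_G(v)-3$ gives $\delta^c(G')\ge(n-4)/2$, improved via integrality to $\delta^c(G')\ge(n-3)/2$ whenever $n$ is odd.

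Consider the odd-$n$ case first. The order $n-3$ of $G'$ is even, so Theorem \ref{16} forces $G'\cong K_{m,m}$ with $m=(n-3)/2$, except for the possibilities $G'\in\{K_4,K_4-e\}$ when $n=7$. In the bipartite generic case with bipartition $(A,B)$, the inequalities $d^c_G(v)\ge m+3$ and $d^c_{G'}(v)\le m$ combine to show that every $v\in V(G')$ is adjacent to all three of $x_1,x_2,x_3$ via three edges of three distinct ``fresh'' colors disjoint from the $m$ colors on its edges inside $G'$. Next, if all edges from $V(G')$ to some $x_i$ shared a single color, then $d^c_G(x_i)\le 3$, contradicting $d^c_G(x_i)\ge m+3\ge 5$; a short pigeonholing then yields $a\in A$, $b\in B$, $i\in\{1,2,3\}$ with $c(ax_i)\ne c(bx_i)$, and freshness makes $T^\star=abx_i$ rainbow.

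It remains to produce a rainbow triangle in $G-V(T^\star)$. With $\{j,k\}=\{1,2,3\}\setminus\{i\}$, classify candidates as \emph{type I} ($x_jx_kv$ with $v\in V(G')\setminus\{a,b\}$) or \emph{type II} ($x_\ell a'b'$ with $\ell\in\{j,k\}$, $a'\in A\setminus\{a\}$, $b'\in B\setminus\{b\}$). Freshness makes a type-II triangle $x_\ell a'b'$ rainbow iff $c(x_\ell a')\ne c(x_\ell b')$, so if no type-II rainbow triangle exists at label $\ell$ then all edges $x_\ell v$ for $v\in V(G')\setminus\{a,b\}$ share one color $\gamma_\ell$; if additionally no type-I triangle is rainbow, then $c(x_jx_k)\in\{\gamma_j,\gamma_k\}$. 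Taking WLOG $c(x_jx_k)=\gamma_j$, the color palette of $x_j$ in $G$ is forced into $\{c(x_ix_j),\gamma_j,c(x_ja),c(x_jb)\}$, giving $d^c_G(x_j)\le 4$---a contradiction to $d^c_G(x_j)\ge m+3\ge 5$, valid for every odd $n\ge 7$. The small alternatives $G'\in\{K_4,K_4-e\}$ at $n=7$ are handled by direct case analysis on the at most ten edges of $G[V(G')]$.

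The even-$n$ case is reduced to the odd one by a replacement step. Theorem \ref{15} applied to $G'$ yields some $v\in V(G')$ with $d^c_{G'}(v)=(n-4)/2$, and as before $v$ must be adjacent to all of $V(T)$ via three distinct fresh colors. A count analogous to the $d^c_G(x_j)\le 4$ argument, applied to the three candidate triangles $vx_ix_j$, shows at least one is rainbow; replacing $T$ by this $T^\star=vx_ix_j$ and iterating, one eventually lands in a configuration where the odd-case argument applies. The main obstacle is the tight color counting in the bipartite subcase: driving the palette of $x_j$ down to only four colors is exactly what permits the threshold $n\ge 7$ and produces the dramatic improvement over the $n\ge 20$ bound of Theorem \ref{12}. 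Tightness of $n\ge 7$ is witnessed by the $n=6$ construction of Figure \ref{fig4}.
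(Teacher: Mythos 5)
Your odd-$n$ argument is essentially sound (the type-I/type-II analysis in the $K_{m,m}$ case does force $d^c_G(x_j)\le 4$, using that $\gamma_j\ne\gamma_k$ because every $v\in V(G')$ meets $x_1,x_2,x_3$ in three distinct fresh colors), but the even-$n$ case is a genuine gap, and it is half the theorem. Deleting all three vertices of the first rainbow triangle only gives $\delta^c(G-V(T))\ge\frac{n-4}{2}$, which for even $n$ falls strictly below the threshold $\frac{|G'|}{2}=\frac{n-3}{2}$ needed for Theorem \ref{16}; indeed Theorem \ref{15} shows $\delta^c(G')$ equals $\frac{n-4}{2}$ exactly, so no choice of rainbow triangle repairs this. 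Your proposed fix rests on the claim that for the minimum-degree vertex $v$ at least one of $vx_ix_j$ is rainbow, and this is simply false: with $c(vx_\ell)=c_\ell$ distinct, the assignment $c(x_1x_2)=c_1$, $c(x_2x_3)=c_2$, $c(x_1x_3)=c_3$ keeps $T$ rainbow while all three triangles $vx_ix_j$ fail. Moreover, even when such a replacement triangle exists, iterating it never changes the parity obstruction, so ``eventually lands in a configuration where the odd-case argument applies'' has no justification. A secondary gap: the exceptional subcase $G'\in\{K_4,K_4-e\}$ at $n=7$ is only asserted, not carried out, and there the freshness argument weakens (each $v$ is only forced to gain two, not three, new colors), so it is not a one-line check.

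The paper avoids the parity problem by never deleting three vertices. Following the proof of Theorem \ref{11}, one first shows (via the sets $W_1,W_2,W_3$ and the common vertex $a_0$) that if $G$ has no two disjoint rainbow triangles then there exist just \emph{two} vertices whose removal destroys all rainbow triangles; then $\delta^c(G-\{y,z\})\ge\frac{n+2}{2}-2=\frac{n-2}{2}=\frac{|G'|}{2}$, so Theorem \ref{16} applies for every parity of $n$, and Lemma \ref{l2} produces two disjoint rainbow triangles through $y$ and $z$ from the properly colored balanced complete bipartite structure. If you want to salvage your approach, you need either this two-vertex reduction or a separate argument for even $n$ that does not rely on Theorem \ref{16} applied to $G-V(T)$.
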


\begin{figure}[htbp]
  \centering
 \scalebox{1}{\includegraphics[width=1.2in,height=1.2in]{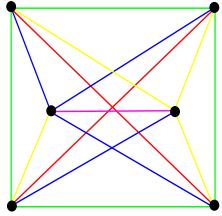}}\\
 \captionsetup{font={scriptsize}}
  \caption{A counterexample for $n=6$ in Theorem \ref{14}}\label{fig4}
\end{figure}

As far as short cycles are concerned in an edge-colored complete graph,
Li et al. showed the following results.

\begin{theorem}[\cite{LBZ}]\label{5}
Let $G$ be an edge-colored complete graph of order $n$.
If $\Delta^{mon}(G)\leq n-2$,
then $G$ contains a properly colored cycle of length at most $4$.
\end{theorem}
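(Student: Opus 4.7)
My plan is to argue by contradiction, assuming that $G$ contains no properly colored cycle of length at most $4$. Fix an edge $uv$ with $c(uv)=1$ and partition $V(G)\setminus\{u,v\}$ into four sets $S_{ij}$ ($i,j\in\{0,1\}$), where $w\in S_{ij}$ means that $c(uw)=1$ iff $i=1$ and $c(vw)=1$ iff $j=1$. Since any rainbow triangle is automatically properly colored, every $w\in S_{00}$ must satisfy $c(uw)=c(vw)$; write $a_w\neq 1$ for this common color.

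The first key step is a $4$-cycle argument that eliminates one of the ``crossing'' classes. Suppose both $x\in S_{01}$ and $y\in S_{10}$ existed. Then the $4$-cycle $x\,u\,y\,v\,x$ has edges of colors $c(xu)$, $1$, $c(yv)$, $1$ in cyclic order, with $c(xu)\neq 1$ and $c(yv)\neq 1$, so every two consecutive edges carry distinct colors. This is a properly colored $C_4$, contradicting the standing hypothesis. Hence, after possibly swapping $u$ and $v$, I may assume $S_{01}=\emptyset$. Combining the identity $|N_1(u)|=1+|S_{10}|+|S_{11}|$ with $\Delta^{mon}(G)\leq n-2$ and $|S_{00}|+|S_{10}|+|S_{11}|=n-2$ yields $|S_{00}|\geq 1$; pick $w\in S_{00}$ and set $a=a_w$.

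The final step is to pivot the same line of reasoning to the vertex $w$ and the color $a$: for each $z\in V(G)\setminus\{u,v,w\}$, I would use the no-properly-colored-triangle constraint on the triangles $uzw$ and $vzw$ to restrict $c(zw)$ to a short list, and then the no-properly-colored-$C_4$ constraint on a suitably chosen $4$-cycle through $\{u,v,w,z\}$, most naturally $w\,u\,v\,z\,w$, to eliminate all remaining possibilities other than $c(zw)=a$. This would force $|N_a(w)|\geq n-1$, contradicting $\Delta^{mon}(G)\leq n-2$ at $w$.

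I expect the main obstacle to be the subcase $z\in S_{11}$, where $c(uz)=c(vz)=1$, so that every $4$-cycle on the vertex set $\{u,v,w,z\}$ already contains two consecutive edges of color $1$ and therefore cannot be used directly. To treat this case I plan to employ an auxiliary vertex $z'\in S_{10}\cup S_{00}$, whose existence follows from the earlier count, and exploit the constraint $c(z'w)=a$ derived in the easier cases by examining a $4$-cycle such as $z\,u\,z'\,w\,z$ or $z\,v\,z'\,w\,z$ to conclude $c(zw)=a$ as well. A careful bookkeeping of these subcases should complete the argument.
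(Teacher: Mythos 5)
This theorem is quoted from \cite{LBZ}; the paper under review gives no proof of it, so your attempt can only be judged on its own. Your opening steps are correct: the $4$-cycle $x\,u\,y\,v\,x$ does rule out having both $S_{01}\neq\emptyset$ and $S_{10}\neq\emptyset$, and the count $|N_1(u)|=1+|S_{10}|+|S_{11}|\leq n-2$ indeed gives $S_{00}\neq\emptyset$. The problem is the final pivot step, which is the heart of the proof and does not go through. First, for $z\in S_{00}$ with common color $a_z\notin\{1,a\}$ (a case you did not flag as problematic), putting $c(zw)=a_z$ is fully consistent with the absence of properly colored $C_3$'s and $C_4$'s on $\{u,v,w,z\}$: in the coloring $c(uv)=1$, $c(uw)=c(vw)=a$, $c(uz)=c(vz)=c(zw)=a_z$ every triangle and every $4$-cycle on these four vertices has two adjacent edges of equal color, and your cycle $w\,u\,v\,z\,w$ only yields $c(zw)\in\{a,a_z\}$, not $c(zw)=a$. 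Second, the repair you propose for $z\in S_{11}$ fails structurally: if $c(zu)=c(zv)=1$ and you are trying to refute $c(zw)=1$, then in both of your auxiliary cycles $z\,u\,z'\,w\,z$ and $z\,v\,z'\,w\,z$ the two edges incident with $z$ (namely $zu$, respectively $zv$, and $zw$) both have color $1$, so these cycles are never properly colored and can give no contradiction.

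The deeper issue is that the statement your pivot aims at --- that the particular vertex $w\in S_{00}$ you fixed has all $n-1$ incident edges of color $a$ --- is simply not forced by configurations on four or five vertices. The $K_4$ example above (with $1,a=2,a_z=3$ distinct) has no properly colored cycle of length at most $4$, yet $w$ sees colors $2,2,3$; the vertex of monochromatic degree $n-1$ is $z$, not $w$. So even if the theorem's conclusion must ultimately manifest as some vertex of monochromatic degree $n-1$, your argument would need either a more careful (e.g.\ extremal) choice of the special vertex and color, or a genuinely global argument, rather than bookkeeping over $\{u,v,w,z\}$ plus one auxiliary vertex. As written, the proposal is an incomplete plan whose key closing step fails in at least two of its subcases.
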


\begin{theorem}[\cite{LBZ}]\label{6}
Let $G$ be an edge-colored complete graph of order $n$.
If $\Delta^{mon}(G)\leq n-5$,
then $G$ contains two disjoint properly colored cycle of length at most $4$.
\end{theorem}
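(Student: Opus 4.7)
The plan is to derive Theorem~\ref{6} from Theorem~\ref{5} by iteration, supplemented by a reselection argument to handle a single extremal configuration. First, since $\Delta^{mon}(G)\le n-5\le n-2$, Theorem~\ref{5} produces a properly colored cycle $C_1$ of length at most $4$ in $G$. Set $G':=G-V(C_1)$, and try to extract the second disjoint cycle from $G'$. The central monotonicity I would exploit is that passing to an induced subgraph can only shrink the monochromatic star at each vertex, so $\Delta^{mon}(G')\le\Delta^{mon}(G)\le n-5$.

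If $|V(C_1)|=3$, then $|V(G')|=n-3$ and the bound becomes $\Delta^{mon}(G')\le n-5=|V(G')|-2$, so Theorem~\ref{5} applied to $G'$ directly yields a second short PC cycle, vertex-disjoint from $C_1$, and we are done. If $|V(C_1)|=4$, then $|V(G')|=n-4$, and the hypothesis of Theorem~\ref{5} on $G'$ now requires $\Delta^{mon}(G')\le n-6$. Whenever this holds, we again finish immediately. The only remaining case is that $C_1$ is a $4$-cycle and some $v\in V(G')$ together with a color $i$ satisfies $|N_i(v)\cap V(G')|=n-5$; equivalently, every edge from $v$ into $G'$ has color $i$. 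The global assumption $\Delta^{mon}(G)\le n-5$ then forces every one of the four edges from $v$ to $V(C_1)$ to carry a color different from $i$.

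The main obstacle is precisely this residual case. My intended approach is a local exchange: use the four non-$i$-colored edges $vx_j$ with $x_j\in V(C_1)$ together with the edges of $C_1$ to build a new properly colored cycle $C_1'$ of length at most $4$ that uses $v$ in place of one of the $x_j$, thereby evicting the problematic vertex $v$ from the residual graph. Because $v\notin V(G-V(C_1'))$, the monochromatic obstruction in color $i$ disappears, and one now checks whether $\Delta^{mon}(G-V(C_1'))\le (n-4)-2=n-6$. The truly delicate sub-case is when \emph{every} vertex of $V(G')$ is a monochromatic center in color $i$, i.e.\ $G[V(G')]$ is nearly a monochromatic $K_{n-4}$ in color $i$. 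There I would argue directly from completeness: since $\Delta^{mon}(G)\le n-5$, each vertex of $V(C_1)$ has at least two distinct colors on its $n-4$ edges into $V(G')$, and a short case analysis produces two vertex-disjoint short PC cycles, each using vertices from both $V(C_1)$ and $V(G')$. This reselection/structural step is where the strict slack $\Delta^{mon}(G)\le n-5$ (as opposed to the $n-2$ of Theorem~\ref{5}) is really consumed, and it is the only nontrivial part of the argument.
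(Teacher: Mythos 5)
This statement is quoted from \cite{LBZ} and the paper you are reading gives no proof of it at all, so there is nothing internal to compare against; your proposal has to stand on its own, and as written it has genuine gaps concentrated exactly where you say the difficulty lies. The cleanest way to see the first gap: in your residual case ($C_1$ a PC $4$-cycle, $v\in V(G')$ with all $n-5$ edges inside $G'$ of color $i$), it is perfectly consistent with $\Delta^{mon}(G)\leq n-5$ that all four edges $vx_1,vx_2,vx_3,vx_4$ carry one and the same color $a\neq i$ (then $\Delta^{mon}(v)=n-5$ for $n\geq 9$). In that configuration there is \emph{no} properly colored cycle of length at most $4$ through $v$ inside $G[V(C_1)\cup\{v\}]$, since any such cycle uses two edges at $v$ and both have color $a$. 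So the local exchange that is supposed to evict $v$ can fail outright, and you would have to build the replacement cycle using vertices of $G'$ as well, which your argument does not do.

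Second, even when an exchange cycle $C_1'$ through $v$ exists, the dichotomy you offer (either $\Delta^{mon}(G-V(C_1'))\leq n-6$, or all of $V(G')$ is an $i$-monochromatic clique) is not exhaustive. Take $C_1'=vx_1x_2x_3$; a vertex $u\in V(G')\setminus\{v\}$ whose edges to $V(G')\setminus\{v,u\}$ and to the leftover vertex $x_4$ all have some color $j\neq i$ has monochromatic degree $n-5$ in the new residual graph of order $n-4$, yet only $n-5$ in $G$ (the edge $uv$ has color $i$), so the hypothesis does not exclude it. The obstruction can thus migrate to a new vertex and a new color after each reselection, and you give no argument that this process terminates or that such ``near-center'' configurations can be handled; your final sub-case analysis only covers the extreme situation where $G[V(G')]$ is entirely monochromatic, and even there the two disjoint short PC cycles are asserted rather than produced (with only four vertices of $C_1$ available, one must really check that suitable color patterns on $E(V(C_1),V(G'))$ always exist, possibly via PC $4$-cycles). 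Given that the cited source devotes a full paper to this and related statements, the iterate-and-patch plan underestimates the work: the deficit of one between $n-5$ and the $n-6$ needed for direct iteration is precisely where a substantial structural analysis is required, and it is missing here.
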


In the following sections, we will give the proofs of our Theorems \ref{3}, \ref{8},
\ref{10}, \ref{11} and \ref{14}, separately.

\section{Proof of Theorem \ref{3}}

\textbf{{Proof of Theorem \ref{3}}:}
Let $G$ be a graph satisfying the assumptions of Theorem \ref{3}
and $v$ be a vertex of $G$, and let $t=d^c(v)$.
Suppose $|N_1(v)|=\cdots =|N_s(v)|=1$
and $2\leq |N_{s+1}(v)|\leq \cdots \leq |N_t(v)|$.
Clearly $t-s\leq \frac{n-1-s}{2}$.
Let $N_1=\cup _{1\leq i\leq s}N_i(v)$ and
$N_2=\cup_{s+1\leq i\leq t}N_i(v)$.
Let $R(v)$ be a maximum subset of $E(G)$ such
that for any edge $xy\in R(v)$, $vxyv$ is rainbow.
Then the number of rainbow triangles containing $v$
is equal to $|R(v)|$.
Now we give an orientation to $G[N_1]$ in such a way:
for an edge $xy$, if $c(xy)=c(vx)$,
then the orientation of the edge is from $y$ to $x$,
(if $c(xy)=c(vx)=c(vy)$, then we give the orientation arbitrarily);
if $xy\in R(v)$, then we give the orientation arbitrarily.
The oriented graph is denoted by $D$.
Now we proceed by proving the following claims.

Apparently, all out-arcs from a vertex $u\in N_1$ are assigned colors different from $c(uv)$.
Then we can get the following claim.

\begin{claim}\label{c1}
For all $u\in N_1$, $d^c_{N_1\cup \{v\}}(u)\leq d^+_D(u)+1$.
\end{claim}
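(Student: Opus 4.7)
The plan is to injectively map each color appearing at $u$ in $G[N_1\cup\{v\}]$ either to the edge $uv$ (if that color equals $c(uv)$) or to an out-arc at $u$ in $D$ (otherwise). Since each out-arc carries exactly one color and distinct colors correspond to distinct edges, this injection yields $d^c_{N_1\cup\{v\}}(u)\le d^+_D(u)+1$, the ``$+1$'' matching the edge $uv$; the observation recorded just before the claim---that every out-arc at $u$ carries a color different from $c(uv)$---is precisely what guarantees $uv$ is not double-counted.

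The first step is a structural simplification inside $N_1$: since $|N_i(v)|=1$ for $1\le i\le s$, the colors $c(vw)$ with $w\in N_1$ are pairwise distinct, so the parenthetical case $c(xy)=c(vx)=c(vy)$ of the orientation rule never triggers on $G[N_1]$, and the only edges whose orientation is free are those of $R(v)$. Then I would fix an arbitrary color $c\ne c(uv)$ that appears at $u$ on some edge $uw$ with $w\in N_1\setminus\{u\}$. The case $c(uw)=c(vu)$ is excluded, since it would force $c=c(uv)$. If $c(uw)=c(vw)$, the orientation rule gives $u\to w$, producing an out-arc at $u$ of color $c$. Otherwise $c(uw),c(vu),c(vw)$ are pairwise distinct, so $uw\in R(v)$, and we exploit the freedom in orienting rainbow edges to ensure that at least one edge of color $c$ at $u$ is oriented outward from $u$.

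The main obstacle is handling this last step uniformly in $u$: the orientation of edges in $R(v)$ is nominally arbitrary, whereas the inequality must hold at every vertex, so the ``arbitrary'' choice has to be made coherently. I plan to fix the orientation of $D$ globally by selecting, for each vertex $x$ and each color $c\ne c(vx)$ that is witnessed at $x$ only by rainbow edges, one such edge incident to $x$ and orienting it away from $x$ (and orienting any remaining rainbow edges in an arbitrary manner). Because each rainbow edge only needs to be charged to one of its two endpoints in this bookkeeping, no conflict arises. With $D$ oriented in this coherent way the injection above is well-defined at every $u\in N_1$, establishing the claim.
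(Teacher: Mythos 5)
Your handling of the non-rainbow edges is fine: since each color class $N_i(v)$ with $i\le s$ is a singleton, $c(vu)\neq c(vw)$ for distinct $u,w\in N_1$, so an edge $uw$ with $c(uw)\neq c(uv)$ is either an out-arc at $u$ (when $c(uw)=c(vw)$) or lies in $R(v)$. The gap is in your last step, where you claim the rainbow edges can be oriented globally so that every vertex receives an out-arc in every color it needs, asserting that ``each rainbow edge only needs to be charged to one of its two endpoints.'' That assertion is unjustified and in fact false, and no choice of orientation can repair it: take $u,w\in N_1$ with $c(uw)$ a color different from $c(vu)$ and $c(vw)$ that appears at each of $u$ and $w$ (inside $N_1\cup\{v\}$) only on the edge $uw$. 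Then both endpoints need $uw$ as an out-arc, only one can get it, and for the losing endpoint, say $w$, the stated inequality fails; concretely, $s=2$, $N_1=\{u,w\}$, $c(vu)=1$, $c(vw)=2$, $c(uw)=3$ gives $d^c_{N_1\cup\{v\}}(w)=2$ but $d^+_D(w)+1=1$ for whichever endpoint the edge points into. Such a configuration (it is just a rainbow triangle through $v$, which the theorem's hypotheses certainly allow) shows that the claim in its literal, orientation-independent strength cannot be established by any bookkeeping of this kind.

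What the argument actually needs, and what the paper actually uses, is exactly the part you did prove: every edge $uw$ inside $N_1$ with $c(uw)\neq c(uv)$ and $uw\notin R(v)$ is an out-arc at $u$, so every color at $u$ in $N_1\cup\{v\}$ other than $c(uv)$ and other than colors carried by rainbow in-arcs appears on an out-arc. In the proof of Claim \ref{c3} the claim is invoked only under the hypothesis $\psi(R(v))\cap N_1=\emptyset$, so there are no rainbow edges in $G[N_1]$ and the difficulty disappears, and in the main argument it is applied in the corrected form $d^c_{N_1}(u)\le d^+_D(u)+1+|R_u(v)|$, where $R_u(v)$ collects the rainbow in-arcs at $u$. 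So instead of trying to choose the orientation coherently, you should prove (and use) the claim with the rainbow in-arcs accounted for, or under the no-rainbow-edge hypothesis; your first two paragraphs already contain the complete argument for that version.
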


\begin{claim}\label{c2}
$C(N_i(v),N_j(v))\setminus R(v)\subseteq \{i,j\}$ for $1\leq i\neq j\leq t$.
\end{claim}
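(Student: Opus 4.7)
The plan is to unpack the definition of $R(v)$ and argue by contrapositive. Recall that $R(v)$ is the collection of edges $xy\in E(G)$ for which $vxyv$ is a rainbow triangle. The statement to prove concerns colors on edges between the color classes $N_i(v)$ and $N_j(v)$ with $i\ne j$: I will show that any such edge that fails to be in $R(v)$ must carry one of the two colors $i$ or $j$.

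First I would fix distinct indices $1\le i\ne j\le t$ and an arbitrary edge $xy$ with $x\in N_i(v)$ and $y\in N_j(v)$. By the definition of $N_i(v)$ and $N_j(v)$, we have $c(vx)=i$ and $c(vy)=j$, and since $i\ne j$ these two edges already carry distinct colors. Next I would look at the color $c(xy)$. If $c(xy)\notin\{i,j\}$, then the three edges $vx$, $vy$, $xy$ of the triangle $vxyv$ carry three pairwise distinct colors, so $vxyv$ is rainbow; by the definition of $R(v)$ this forces $xy\in R(v)$. Contrapositively, whenever $xy\notin R(v)$ we must have $c(xy)\in\{i,j\}$, which is exactly the assertion that the colors appearing on edges between $N_i(v)$ and $N_j(v)$ outside of $R(v)$ lie in $\{i,j\}$.

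There is essentially no obstacle here: the claim is a direct unpacking of what it means for a triangle at $v$ to be rainbow, together with the definition of $R(v)$ as (the set collecting) edges completing $v$ to a rainbow triangle. The claim will then be used in the subsequent counting arguments, where it plays the role of bounding how many new colors can be contributed by cross-edges between different color classes $N_i(v)$ when these edges do not already yield rainbow triangles.
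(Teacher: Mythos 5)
Your proof is correct and coincides with the paper's (implicit) reasoning: the paper states this claim without proof, treating it as the immediate observation that any edge $xy$ between $N_i(v)$ and $N_j(v)$ with $c(xy)\notin\{i,j\}$ completes a rainbow triangle $vxyv$ and hence, by the maximality of $R(v)$, lies in $R(v)$. Your contrapositive unpacking (reading the claim, as intended, as a statement about the colors of the cross-edges not belonging to $R(v)$) is exactly this argument.
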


\begin{claim}\label{c3}
$\psi (R(v))\cap N_1\neq \emptyset$.
\end{claim}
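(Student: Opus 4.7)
The plan is to argue by contradiction: assume $\psi(R(v))\cap N_1=\emptyset$, so that no edge with at least one endpoint in $N_1$ belongs to $R(v)$. Two immediate consequences will be recorded at the outset: first, for every $u\in N_i(v)\cap N_1$ and every $w\in N_j(v)$ with $j\neq i$, Claim \ref{c2} forces $c(uw)\in\{i,j\}$; second, every edge of $G[N_1]$ is oriented in $D$, since an unoriented edge would have had to lie in $R(v)$ according to the definition of $D$.

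Fix an arbitrary $u\in N_i(v)\subseteq N_1$. I would bound $d^c(u)$ from above by partitioning the edges at $u$ and tracking the colors in three groups. The edge $uv$ contributes only the color $i$. Each edge from $u$ to $N_1\setminus\{u\}$ contributes color $i$ when it is an in-arc at $u$, and when it is an out-arc $u\to x$ with $x\in N_j(v)\cap N_1$, contributes $c(ux)=c(vx)=j\in\{1,\ldots,s\}\setminus\{i\}$; since distinct singleton classes produce distinct values of $j$, the out-arcs contribute exactly $d^+_D(u)$ pairwise distinct colors, all different from $i$. Finally, each edge from $u$ to $N_2$ has color in $\{i\}\cup\{s+1,\ldots,t\}$, and so adds at most $t-s$ new colors beyond $i$. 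The three color-pools $\{i\}$, the out-arc colors, and the subset of $\{s+1,\ldots,t\}$ are pairwise disjoint, so $d^c(u)\le 1+d^+_D(u)+(t-s)$.

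Combining with $d^c(u)\ge (n+k)/2$ gives $d^+_D(u)\ge (n+k)/2-1-(t-s)$. Summing over the $s$ vertices of $N_1$ and using $\sum_{u\in N_1}d^+_D(u)=\binom{s}{2}$ (all edges of the complete graph $G[N_1]$ are oriented), after simplification one obtains $2t\ge n+k+s-1$. On the other hand, the partition identity $|N_2|=n-1-s$ combined with $|N_j(v)|\ge 2$ for $j>s$ yields $2t\le n+s-1$. Juxtaposing these two inequalities forces $k\le 0$, which contradicts $k\ge 1$ (the case $k=0$ being trivial in Theorem \ref{3}, and $s=0$ making the claim vacuous in any event). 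I expect the only genuine subtlety to be verifying the disjointness of the three color-pools at $u$, since this disjointness is what makes the bound $d^c(u)\le 1+d^+_D(u)+(t-s)$ additive and hence tight enough to close the arithmetic; once this is in place, the closing inequality chain is forced by the defining inequalities for $s$ and $t$.
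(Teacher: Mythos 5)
Your proof is correct and follows essentially the same route as the paper: you combine Claim \ref{c2} (colors toward $N_2$ lie in $\{i\}\cup\{s+1,\dots,t\}$) with the tournament orientation on $G[N_1]$ to get $d^c(u)\le 1+d^+_D(u)+(t-s)$, which is exactly Claims \ref{c1} and \ref{c2} as used in the paper, and your summation $\sum_{u\in N_1}d^+_D(u)=\binom{s}{2}$ is just the averaged form of the paper's choice of a vertex of minimum out-degree ($\le\frac{s-1}{2}$), leading to the same arithmetic contradiction (your $k\le 0$ versus the paper's $d^c(w)\le\frac{n}{2}<\frac{n+k}{2}$). The side cases you worry about do not arise, since the hypothesis $\delta^c(G)\ge\frac{n+k}{2}$ with $k\ge 1$ already forces $s\ge k+1\ge 2$.
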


\begin{proof}
Suppose not, then all in-arcs to $u\in N_1$ are assigned color $c(uv)$ in $D$.
Then from Claim \ref{c1}, there is a vertex $w$ in $N_1$,
such that $d^c_{N_1\cup \{v\}}(w)\leq \frac{s+1}{2}$.
Since there is no edge in $R(v)$ incident with $w$,
from Claim \ref{c2}, we have $C(w,N_j(v))\subseteq \{c(vw),j\}$.
Hence, $d^c(w)\leq \frac{s+1}{2}+t-s\leq \frac{n}{2}$,
a contradiction.
\end{proof}

Now we proceed the proof of Theorem \ref{3} by induction
on $k$. The case $k=1$ follows from Theorem \ref{1}.
Let $k\geq 2$ and suppose Theorem \ref{3} holds for $k-1$.
Suppose to the contrary, that there is a vertex $v$ such that $|R(v)|<k$.
Since $\delta^c(G)\geq \frac{n+k}{2}\geq \frac{n+k-1}{2}$, we have
$|R(v)|=k-1$.
Certainly, $s\geq k+1$ since $\delta^c(G)\geq \frac{n+k}{2}$.
For $u\in N_1$, let $R_u(v)$ be the subset of $R(v)$ in which each edge is an in-arc to
$u$ and $\psi(R_u(v))\subseteq N_1$
and $R_u'(v)$ be the subset of $R(v)$ in which each edge is incident
with $u$ and $\psi(R_u'(v))\setminus \{u\}\subseteq N_2$,
that is, $R_u(v)=\{uw\in R(v)~|~\overleftarrow{uw}\in D\}$
and $R_u(v)=\{uw\in R(v)~|~w\in N_2\}$.
Hence, from Claims \ref{c1} and \ref{c2},
we have $d^c_{N_1}(u)\leq d^+_{D}(u)+1+|R_u(v)|$ and
$|C(u,N_2)\setminus \{c(uv)\}|\leq t-s+|R_u'(v)|$.
Then,
$$\begin{array}{ll}
d^c(u)&\leq d^+_{D}(u)+1+|R_u(v)|+|R_u'(v)|+t-s.
\end{array}$$
Since $|R_u(v)|+|R_u'(v)|\leq k-1$, if $d^+_{D}(u)\leq \frac{s-k}{2}$,
we have $d^c(u)\leq \frac{n+k-1}{2}$,
a contradiction.
Therefore, $d^+_{D}(u)\geq \frac{s-k+1}{2}$ for $u\in N_1$.
Let $w$ be a vertex with minimum out-degree in $D$.
Then $\frac{s-k+1}{2}\leq d^+_{N_1}(w)\leq \frac{s-1}{2}$.
Assume that $d^+_{D}(w)=\frac{s-k+a}{2}$, $1\leq a\leq k-1$.
Then $|R_w(v)|+|R_w'(v)|\geq k-\frac{a+1}{2}$,
otherwise $d^c(w)<\frac{n+k}{2}$.
Since the edges in $R_w(v)$ are in-arcs to $w$,
they are out-arcs from the vertices in $\psi(R_w(v))\setminus\{w\}$.
Then for $u\in N_1\setminus\{w\}$,
we have $|R_u(v)|+|R_u'(v)|\leq\frac{a-1}{2}$.
So, $d^+_{D}(u)\geq \frac{s+k-a}{2}$ for all $u\in N_1\setminus \{w\}$.
Hence, $\sum_{u\in N_1}d^+_{D}(u)\geq (s-1)\frac{s+k-a}{2}+\frac{s-k+a}{2}$.
Since $s\geq k+1$ and $1\leq a\leq k-1$,
we have $\sum_{u\in N_1}d^+_{D}(u)>\frac{s(s-1)}{2}$,
a contradiction.
$\hfill\qedsymbol$

\section{Proofs of Theorems \ref{8} and \ref{10}}

\textbf{Proof of Theorem \ref{8}:}
Let $G$ be a graph satisfying the assumptions of Theorem \ref{8}
and $v$ be a vertex of $G$.
Suppose, to the contrary, that $G$ has no rainbow triangle.
Assume that $|N_1(v)|=\cdots =|N_s(v)|=1$ and $2\leq |N_{s+1}(v)|\leq \cdots \leq |N_t(v)|$ ($t=d^c(v)$).
Clearly, $t-s\leq \frac{n-1-s}{2}$.
Let $N_1=\cup _{1\leq i\leq s}N_i(v)$ and
$N_2=\cup_{s+1\leq i\leq t}N_i(v)$.
Now we proceed by proving the following claims.

\begin{claim}\label{c4}
$C(N_i(v),N_j(v))\subseteq \{i,j\}$, for $1\leq i\neq j\leq t$.
\end{claim}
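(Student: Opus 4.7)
The proof of Claim \ref{c4} should be essentially a one-line observation, directly exploiting the standing hypothesis that $G$ contains no rainbow triangle.

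The plan is as follows. Fix indices $i,j$ with $1 \le i \ne j \le t$, and pick arbitrary vertices $x \in N_i(v)$ and $y \in N_j(v)$. Because $G$ is complete, the three vertices $v,x,y$ form a triangle, whose edges carry the colors $c(vx)=i$, $c(vy)=j$, and $c(xy)$. If $c(xy) \notin \{i,j\}$, then all three colors on this triangle would be pairwise distinct, which would make $vxy$ a rainbow triangle, contradicting the assumption that $G$ has none. Hence $c(xy) \in \{i,j\}$, and since $x,y$ were arbitrary, this gives $C(N_i(v),N_j(v)) \subseteq \{i,j\}$.

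There is no real obstacle here: the claim is an immediate structural consequence of the absence of rainbow triangles through $v$, combined with the fact that every pair of vertices in $N_i(v) \cup N_j(v)$ is joined by an edge in the complete graph $G$. The content of this claim is simply to set up the notation and bookkeeping that will be used in the rest of the proof (together with Claims that count colors at vertices of low color-degree) to force the rigid partition structure of Theorem \ref{10}.
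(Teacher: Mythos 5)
Your proof is correct and is exactly the intended (and, in the paper, omitted-as-obvious) argument: since $c(vx)=i$ and $c(vy)=j$ with $i\neq j$, any third color on $xy$ would make $vxyv$ a rainbow triangle, contradicting the standing assumption. Nothing further is needed.
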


\begin{claim}\label{c5}
If $|N_1|\geq 2$, then there is a vertex $u\in N_1$
such that $C(u,N_1\setminus \{u\})=\{c(vu)\}$.
\end{claim}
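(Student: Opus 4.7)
The plan is to encode the question as a statement about a tournament on $N_1$. By Claim~\ref{c4}, for distinct $u \in N_i(v)$ and $w \in N_j(v)$ contained in $N_1$ (with $1 \le i \ne j \le s$), one has $c(uw) \in \{i,j\} = \{c(vu), c(vw)\}$, and exactly one of the two options occurs. I orient $u \to w$ precisely when $c(uw) = c(vw)$. This gives a well-defined tournament $T$ on $N_1$, in which a vertex $u \in N_i(v)$ has out-degree zero if and only if every edge from $u$ inside $N_1$ has color $i = c(vu)$, that is, iff $C(u, N_1 \setminus \{u\}) = \{c(vu)\}$. Hence the claim amounts to showing that $T$ has a sink.

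The key observation is that any directed $3$-cycle in $T$ yields a rainbow triangle in $G$, contradicting the standing hypothesis of the proof of Theorem~\ref{8}. Indeed, suppose $u_1 \to u_2 \to u_3 \to u_1$ with $u_k \in N_{i_k}(v)$. By the definition of the orientation one reads off $c(u_1u_2) = i_2$, $c(u_2u_3) = i_3$, and $c(u_3u_1) = i_1$. Since $N_{i_1}(v)$, $N_{i_2}(v)$, $N_{i_3}(v)$ are pairwise disjoint, the indices $i_1,i_2,i_3$ are pairwise distinct, so the triangle $u_1u_2u_3$ is rainbow---a contradiction.

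Therefore $T$ is acyclic. Every acyclic tournament is transitive and hence has a unique sink; that sink is the vertex $u$ demanded by the claim. The main (and really only) conceptual point is the identification of directed $3$-cycles of $T$ with rainbow triangles; once that is in place, the sink arises for free from standard tournament structure, with no case analysis or color-degree arithmetic needed. Note that $|N_1| \ge 2$ is used only to ensure $T$ is nonempty so that ``a sink exists'' is a meaningful statement.
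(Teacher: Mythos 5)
Your proof is correct, but it takes a different route from the paper's. The paper argues extremally: assuming no such vertex exists, it takes $u\in N_1$ of minimum color-degree in $G[N_1]$, splits $N_1\setminus\{u\}$ into $W_1=\{w: c(uw)=c(vu)\}$ and $W_2=\{w: c(uw)=c(vw)\}$ (possible by Claim~\ref{c4}), shows that the absence of rainbow triangles forces every edge from a vertex $w\in W_2$ to $W_1$ to carry color $c(vw)$, and thereby finds a vertex of strictly smaller color-degree in $G[N_1]$, a contradiction. You instead package the same dichotomy $c(uw)\in\{c(vu),c(vw)\}$ (valid since the classes meeting $N_1$ are singletons, so the two options are genuinely distinct) into a tournament on $N_1$, observe that a directed $3$-cycle would read off three pairwise distinct colors and hence a rainbow triangle, and conclude the tournament is transitive and has a sink, which is exactly the desired vertex. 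Both arguments are sound and rest on the same two ingredients (Claim~\ref{c4} and the standing no-rainbow-triangle hypothesis); yours trades the paper's minimum-degree bookkeeping for the standard fact that a $3$-cycle-free tournament is transitive, which is arguably cleaner and is close in spirit to the orientation $D$ of $G[N_1]$ that the authors themselves introduce in the proof of Theorem~\ref{3}. One small nuance: your closing remark that $|N_1|\ge 2$ is needed only for nonemptiness of $T$ slightly undersells it --- it is also what guarantees the sink $u$ actually has an incident edge in $N_1$, so that $C(u,N_1\setminus\{u\})$ equals $\{c(vu)\}$ rather than being empty; this does not affect the validity of your argument in the stated range.
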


\begin{proof}
Suppose not, let $u$ be a vertex in $N_1$
with minimum color-degree in $G[N_1]$.
Set $W_1=\{w\in N_1~|~c(uw)=c(vu)\}$
and $W_2=\{w\in N_1~|~c(uw)= c(vw)\}$.
From Claim \ref{c4}
we have $N_1\setminus\{u\}=W_1\cup W_2$.
For any vertex $w_1\in W_1$ and $w_2\in W_2$,
from Claim \ref{c4},
we have $c(w_1w_2)\in \{c(vw_1),c(vw_2)\}$.
According to the definitions of $N_1$, $W_1$ and $W_2$,
we have $c(uw_1)\neq c(uw_2)$.
Then, $c(w_1w_2)=c(uw_2)$,
otherwise $uw_1w_2u$ is a rainbow triangle, a contradiction.
Hence, for any vertex $w\in W_2$,
$C(w,W_1)=\{c(vw)\}$.
Then, there is a vertex $w$ in $W_2$ such that
$d^c_{N_1}(w)<d^c_{N_1}(u)$.
Therefore, we can find a vertex $u\in N_1$
such that $C(u,N_1\setminus \{u\})=\{c(vu)\}$.
\end{proof}

Next we distinguish two cases.

\textbf{Case 1.} $|N_1|\geq 2$.

Let $u$ be a vertex in $N_1$ such that $C(u,N_1\setminus \{u\})=\{c(vu)\}$.
From Claim \ref{c4}, we have $C(u,N_j(v))\subseteq \{c(vu),j\}$ for $s+1\leq j\leq t$.
Hence, $d^c(u)\leq t-s+1\leq \frac{n-1-s}{2}+1\leq \frac{n-1}{2}$,
a contradiction.

\textbf{Case 2.} $|N_1|=1$.

Let $N_1=\{u\}$. Then $|N_j(v)|=2$ and $j\in C(u,N_j(v))$ for $2\leq j\leq t$,
otherwise $d^c(u)<\frac{n}{2}$.
Assume there is a set $N_k(v)=\{x,y\}$.
W.l.o.g., suppose $c(ux)=c(vx)=k$.
Since $n\geq 8$, there exists a vertex $z\in N_2\setminus N_k(v)$
such that $c(xz)=c(vz)$.
Hence, $c(uz)=c(vz)$,
otherwise from Claim \ref{c4},
$uxzu$ is a rainbow triangle.
Therefore,
$c(zy)=c(vy)=k$,
otherwise $d^c(z)\leq t-s-2+2\leq \frac{n}{2}-1$, a contradiction.
Hence, $c(zx)\neq c(zy)$.
Then, $c(xy)\in\{c(zx),c(zy)\}= \{c(vx),c(vz)\}$,
otherwise $xyzx$ is a rainbow triangle.
So, $d^c(x)\leq t-s-1+1\leq \frac{n}{2}-1$, a contradiction.

$\hfill\qedsymbol$

\textbf{{Proof of Theorem \ref{10}}:}
Let $G$ be a graph satisfying the assumptions of Theorem \ref{10}.
Since $G$ has no rainbow triangle and $\delta^c(G)\geq \frac{n-1}{2}$,
there exist a vertex $v$ such that $d^{c}(v)=\frac{n-1}{2}=t$ in $G$.
Assume that $|N_1(v)|=\cdots =|N_s(v)|=1$
and $2\leq |N_{s+1}(v)|\leq \cdots \leq |N_t(v)|$.
Let $N_1=\cup _{1\leq i\leq s}N_i(v)$ and
$N_2=\cup_{s+1\leq i\leq t}N_i(v)$.
Now we proceed by proving the following claims.

\begin{claim}\label{c7}
$C(N_i(v),N_j(v))\subseteq \{i,j\}$, for $1\leq i\neq j\leq t$.
\end{claim}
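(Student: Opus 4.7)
The plan is to give a direct one-line argument exploiting the defining hypothesis of Theorem \ref{10} that $G$ contains no rainbow triangle. Since $G$ is complete, for any two vertices $x\in N_i(v)$ and $y\in N_j(v)$ with $i\neq j$ the three vertices $v,x,y$ span a triangle in $G$.

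First, I would fix arbitrary $i\neq j$ with $1\le i,j\le t$ and pick $x\in N_i(v)$, $y\in N_j(v)$. By definition of the sets $N_\ell(v)$, we have $c(vx)=i$ and $c(vy)=j$, and these two colors are distinct because $i\neq j$. Then I would consider the triangle $vxyv$: its three edges carry colors $i$, $j$, and $c(xy)$, with the first two already distinct. If $c(xy)\notin\{i,j\}$, then all three colors of the triangle would be pairwise distinct, producing a rainbow triangle at $v$ and contradicting the hypothesis of Theorem \ref{10}. Hence $c(xy)\in\{i,j\}$, and since $xy$ was arbitrary, $C(N_i(v),N_j(v))\subseteq\{i,j\}$.

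There is no real obstacle here: the statement is an immediate structural observation, essentially identical to Claim \ref{c4} in the proof of Theorem \ref{8}, and it follows purely from the rainbow-triangle-free assumption together with completeness of $G$. The content of Theorem \ref{10} lies entirely in the subsequent claims (about $|A_i|$, about which colors appear inside each $A_i$, and about the partition structure), where Claim \ref{c7} will be used as a combinatorial backbone to constrain how colors can distribute across the neighborhood classes of $v$.
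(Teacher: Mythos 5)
Your proof is correct and matches the paper's intent: the paper states Claim \ref{c7} without proof, treating it as the immediate observation (as with Claim \ref{c4}) that for $x\in N_i(v)$, $y\in N_j(v)$ the triangle $vxyv$ has edge colors $i$, $j$, $c(xy)$, so $c(xy)\in\{i,j\}$ since $G$ has no rainbow triangle. No difference in approach; your write-up simply makes the implicit argument explicit.
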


\begin{claim}\label{c8}
$N_1=\emptyset$.
\end{claim}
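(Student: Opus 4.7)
My plan is to argue by contradiction: suppose $s:=|N_1|\geq 1$, and split the analysis according to whether $s\geq 2$ or $s=1$. In the former case a direct color-degree count suffices; in the latter the bound becomes tight, and one must exhibit a rainbow triangle from the residual structure.

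For $s\geq 2$, I would adapt the argument of Claim~\ref{c5} from the proof of Theorem~\ref{8} inside $G[N_1]$. Pick $u\in N_1$ of minimum color-degree in $G[N_1]$ and, using Claim~\ref{c7}, partition $N_1\setminus\{u\}$ as $W_1=\{w:c(uw)=c(vu)\}$ and $W_2=\{w:c(uw)=c(vw)\}$. The no-rainbow-triangle condition on $uw_1w_2$ with $w_1\in W_1$ and $w_2\in W_2$ forces $c(w_1w_2)=c(uw_2)$, so some $w\in W_2$ has $d^c_{N_1}(w)<d^c_{N_1}(u)$ unless $W_2=\emptyset$. Hence $C(u,N_1\setminus\{u\})=\{c(vu)\}$, and Claim~\ref{c7} then gives $d^c(u)\leq 1+(t-s)\leq t-1<\frac{n-1}{2}$, contradicting $\delta^c(G)\geq\frac{n-1}{2}$.

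The substantive case is $s=1$; write $N_1=\{u\}$. Then $d^c(u)\leq 1+(t-1)=t$ combined with $\delta^c(G)\leq d^c(u)$ forces $d^c(u)=t$ and that color $j$ appears on some edge from $u$ to $N_j(v)$ for every $j\in\{2,\ldots,t\}$. The count $\sum_{j\geq 2}|N_j(v)|=n-2=2t-1$, spread over $t-1$ classes each of size $\geq 2$, pins down one class of size $3$, say $N_t(v)=\{x_1,x_2,x_3\}$, with all others of size $2$. For a size-$2$ class $N_k(v)=\{x,y\}$ with $c(ux)=k$ (WLOG), the upper bound $d^c(x)\leq t$ coming from Claim~\ref{c7} is tight only if $c(xy)\notin\{2,\ldots,t\}$, and the no-rainbow-triangle condition on the triangle $uxy$ then pins $c(uy)\in\{k,c(xy)\}$; combined with $c(uy)\in\{1,k\}$ this forces either $c(uy)=k$ or $c(uy)=c(xy)=1$.

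The final step is to use the size-$3$ class to produce a rainbow triangle. For any $x_i\in N_t(v)$, $d^c(x_i)\geq t$ combined with $C(x_i,N_j(v))\subseteq\{j,t\}$ (Claim~\ref{c7}) forces at least one of the three internal edges of $N_t(v)$ to carry a color outside $\{2,\ldots,t\}$; combining the pinned colors of edges among $u$, the size-$2$ classes, and $\{x_1,x_2,x_3\}$ via the triangles $ux_ix_j$, $w_kx_ix_j$, and $w_kw_k'x_i$ should yield either a rainbow triangle or a vertex with color-degree strictly below $\frac{n-1}{2}$. This final combinatorial analysis of the size-$3$ class is where I expect the main obstacle to lie: it is the analogue of Case~2 in the proof of Theorem~\ref{8} (where a third vertex required the hypothesis $n\geq 8$), and the delicate enumeration of the consistent color configurations within $N_t(v)$ and between $N_t(v)$ and the size-$2$ classes is the technical heart of the argument.
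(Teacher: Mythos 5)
Your case $s\ge 2$ is fine (and in fact slightly cleaner than the paper's, since the bound $d^c(u)\le 1+(t-s)\le t-1<\frac{n-1}{2}$ needs no size-$3$ class), and your setup for $s=1$ is correct: exactly one class of size $3$, $d^c(u)=t$ with color $j$ forced on $E(u,N_j(v))$, and, for a size-$2$ class $\{x,y\}$, the tightness condition that $d^c(x)\ge t$ requires $c(xy)\notin\{2,\ldots,t\}$. But the decisive step — actually deriving a contradiction when $s=1$ — is not carried out: you only describe a plan ("should yield either a rainbow triangle or a vertex with color-degree strictly below $\frac{n-1}{2}$") and yourself flag it as the main obstacle. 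Worse, the one concrete assertion you make there is unjustified: for $x_i$ in the size-$3$ class, Claim \ref{c7} only bounds the colors on non-internal edges by $t$, and the two internal edges give $d^c(x_i)\le t+2$, so $d^c(x_i)\ge t$ does \emph{not} force any internal edge of the size-$3$ class to carry a color outside $\{2,\ldots,t\}$. So the proposal, as it stands, does not prove the claim.

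The paper closes the $s=1$ case without ever analyzing the internal edges of the size-$3$ class, and the missing idea dovetails exactly with the tightness condition you already derived. Take a size-$2$ class $N_l(v)=\{x,y\}$ with $c(ux)=c(vx)=l$ (which your forcing on $u$ provides). Since $d^c(x)\ge t$ and $C(x,N_j(v))\subseteq\{l,j\}$, the color of the size-$3$ class must appear on an edge from $x$ into that class, i.e. there is $z$ in the size-$3$ class with $c(xz)=c(vz)$; one then argues (via the color-degree of $z$ and non-rainbow triangles through $u$) that $c(zy)=c(vy)$, so $c(xz)\ne c(zy)$, and the requirement that the triangle $xyzx$ not be rainbow pins $c(xy)\in\{c(vx),c(vz)\}\subseteq\{2,\ldots,t\}$. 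This contradicts precisely your own observation that $d^c(x)=t$ forces $c(xy)\notin\{2,\ldots,t\}$, giving $d^c(x)\le t-1<\frac{n-1}{2}$. In short: route the contradiction through a vertex of a size-$2$ class and the mixed triangle $xyz$ with $z$ in the size-$3$ class, rather than through an enumeration of color configurations inside the size-$3$ class, which your degree counts cannot control.
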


\begin{proof}
Suppose not, since $d^c(v)=\frac{n-1}{2}$,
there is a set $N_k(v)$, $s+1\leq k\leq t$,
such that $|N_k(v)|\geq 3$.
Thus, $s-t\leq \frac{n-s-2}{2}$.
From Claim \ref{c7}, $C(u,N_j(v))\subseteq \{c(vu),j\}$ for $s+1\leq j\leq t$.

If $|N_1|\geq 2$,
as in the proof of Theorem \ref{8},
there is a vertex $u\in N_1$
such that $C(u,N_1\setminus \{u\})=\{c(vu)\}$.
Then, $d^c(u)\leq t-s+1\leq \frac{n-2-s}{2}+1\leq \frac{n}{2}-1$,
a contradiction.
If $|N_1|=1$, let $N_1=\{u\}$, and assume that $|N_2(v)|=3$
and $|N_j(v)|=2$ for $3\leq j\leq t$.
Then there must exist a vertex $x\in N_l(v)$
such that $c(ux)=c(vx)$, $3\leq l\leq t$.
Let $N_l(v)=\{x,y\}$.
Then there is a vertex $z\in N_k(v)$ such that $c(xz)=c(vz)$,
otherwise $d^c(x)<\frac{n-1}{2}$.
Hence, $c(zy)=c(vy)$,
otherwise $d^c(z)<\frac{n-1}{2}$.
Then $c(xz)\neq c(yz)$.
Therefore, $c(xy)\in \{c(vz),c(vx)\}$,
otherwise $xyzx$ is a rainbow triangle.
Then, $d^c(x)\leq \frac{n-2-1}{2}<\frac{n-1}{2}$,
a contradiction.
\end{proof}

By Claim \ref{c8},
$|N_i(v)|=2$ for all $1\leq i\leq t$.
Thus, $n$ is odd.
Since $\delta^c(G)\geq \frac{n-1}{2}$,
by Claim \ref{c7},
we have that for any vertex $u\in N_i(v)$,
$j\in C(u,N_j(v))$ for $1\leq j\neq i\leq t$,
that is $C(u, N_j(v))=\{c(vu),j\}$.
If $n\leq 5$,
it is easy to verify that
$C(N_i(v))\subset \{1,2\}$.
If $n\geq 7$,
then $C(N_j(v))=\{j\}$.
Otherwise, suppose that there is a set $N_k(v)=\{x,y\}$
such that $c(xy)\neq k$.
Then there is a vertex $z\in N_2\setminus N_k(v)$ such that
$xyzx$ is a rainbow triangle, a contradiction.
Therefore, let $A_0=\{v\}$ and $A_i=N_i(v)$ for $1\leq i\leq \frac{n-1}{2}$.
This completes the proof.
$\hfill\qedsymbol$

\section{Proofs of Theorems \ref{11} and \ref{14}}

At first we need the following lemmas.

\begin{lemma}\label{l1}
Let $G$ be an edge-colored complete graph of order $n\geq 8$.
If $\delta^c(G)\geq \frac{n+1}{2}$ and
there are two vertices $y,z$ such that $G'=G-\{y,z\}$
has no rainbow triangles,
then $G$ has two vertex-disjoint rainbow triangles containing $y$ and $z$,
respectively.
\end{lemma}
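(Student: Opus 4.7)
The strategy is to apply Theorem~\ref{10} to $G'$ to pin down its coloring structure, use Theorem~\ref{8} to produce a rainbow triangle through each of $y$ and $z$, and then argue that these two triangles can be chosen vertex-disjoint by exploiting the excess color-degree at $y$ and $z$.

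Since removing $y$ and $z$ costs at most two colors at each surviving vertex, $\delta^{c}(G')\ge\delta^{c}(G)-2\ge\frac{n-3}{2}=\frac{|V(G')|-1}{2}$. Together with the hypothesis that $G'$ has no rainbow triangle, this brings $G'$ under the scope of Theorem~\ref{10}, which forces $|V(G')|=n-2$ to be odd (so $n$ is odd and in fact $n\ge 9$) and produces the rigid partition $V(G')=\{v_{0}\}\cup A_{1}\cup\cdots\cup A_{m}$ with $m=(n-3)/2$, $|A_{i}|=2$, and every edge of $G'$ carrying a color from $\{1,2,\ldots,m\}$. Next, $G'+y$ has order $n-1$ and minimum color-degree at least $\delta^{c}(G)-1\ge\frac{n-1}{2}=\frac{|V(G'+y)|}{2}$, so Theorem~\ref{8} produces a rainbow triangle $T_{y}$ in $G'+y$; since $G'$ is rainbow-triangle-free, $T_{y}$ must pass through $y$, with its other two vertices in $V(G')$. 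The symmetric argument applied to $G'+z$ delivers a rainbow triangle $T_{z}\ni z$ whose other two vertices lie in $V(G')$, so $V(T_{y})\cap V(T_{z})\subseteq V(G')$.

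If $V(T_{y})\cap V(T_{z})=\emptyset$ we are done. Otherwise we carry out a case analysis on $S:=V(T_{y})\cap V(T_{z})$, drawing on the extra resource that $d_{G}^{c}(y)\ge\frac{n+1}{2}=m+2$ strictly exceeds the number $m$ of colors used in $G'$, so that the edges from $y$ to $V(G')$ display at least $m+1$ distinct colors and must include some color $\alpha\notin\{1,\ldots,m\}$; the same holds at $z$. When $|S|=1$, say $S=\{u\}$, we reapply Theorem~\ref{8} to $G'+y-u$ (order $n-2$, color-degree at least $\frac{n-3}{2}$) and use that $G'-u$ is still rainbow-triangle-free to obtain a rainbow triangle at $y$ avoiding $u$; combined with the symmetric application at $z$ and a short combinatorial comparison, this produces the desired disjoint pair. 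When $|S|=2$, we instead select an edge $yu$ of ``new'' color $\alpha\notin\{1,\ldots,m\}$: since every edge $uw$ with $w\in V(G')\setminus\{u\}$ has color in $\{1,\ldots,m\}$ and hence different from $\alpha$, the triangle $yuw$ fails to be rainbow only when $c(yw)\in\{\alpha,c(uw)\}$, and the explicit coloring of $G'$ together with $d_{G}^{c}(y)\ge m+2$ produces enough $w\in V(G')\setminus S$ passing this test.

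The principal obstacle is the case $|S|=2$: deleting two vertices of $V(G')$ from $G'+y$ may drop the color-degree below the $(n-3)/2$ threshold that Theorem~\ref{8} requires, so the argument cannot finish by a second direct invocation of the earlier results. Instead it has to combine the rigid structural description of $G'$ from Theorem~\ref{10} with the strict inequality $d_{G}^{c}(y)>m$, which together guarantee enough new-color edges from $y$ (and, symmetrically, from $z$) to yield a rainbow triangle through a vertex pair lying in $V(G')\setminus S$.
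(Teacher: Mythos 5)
Your opening is fine (the reduction $\delta^c(G')\ge\frac{|V(G')|-1}{2}$, the appeal to Theorem~\ref{10}, and producing $T_y$ in $G-z$ and $T_z$ in $G-y$ via Theorem~\ref{8} are all legitimate), but the core of the lemma --- making the two triangles disjoint --- is left with genuine gaps. In your case $|S|=1$ you invoke Theorem~\ref{8} on $G'+y-u=G-z-u$, a graph of order $n-2$ whose minimum color-degree you can only bound by $\frac{n+1}{2}-2=\frac{n-3}{2}$; Theorem~\ref{8} needs $\frac{n-2}{2}$, so that invocation is not available (Theorem~\ref{10} would apply, but then you cannot exclude that $G-z-u$ is itself rainbow-triangle-free), and even granting a rainbow triangle at $y$ avoiding $u$, nothing ensures it avoids the third vertex of $T_z$; the ``short combinatorial comparison'' is exactly the missing argument. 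In your case $|S|=2$ the decisive claim --- that there are enough $w\in V(G')\setminus S$ with $c(yw)\notin\{\alpha,c(uw)\}$ --- is asserted, not proved, and moreover the vertex $u$ carrying the new color $\alpha$ might itself lie in $S$, in which case the triangle $yuw$ is not disjoint from $T_z$. The weak fact you extract ($y$ sends at least one edge of new color into $V(G')$) is too little to close either case.

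The idea you are missing is the rigidity coming from the tightness of Theorem~\ref{10}: every $u\in V(G')$ has $d^c_{G'}(u)=\frac{n-3}{2}$ exactly and its color set in $G'$ is all of $C(G')=\{1,\dots,m\}$, while $d^c_G(u)\ge\frac{n+1}{2}=m+2$; hence \emph{both} edges $uy$ and $uz$ receive colors outside $C(G')$, and $c(uy)\neq c(uz)$. The paper uses this to build the two disjoint triangles directly, with no need for $T_y$, $T_z$ or an intersection analysis: writing $A_i=\{a_i,b_i\}$, either some pair satisfies $c(za_k)\neq c(zb_k)$, so $za_kb_kz$ is rainbow, and then a vertex $u\notin\{v\}\cup A_k$ with $c(uy)\neq c(vy)$ gives the disjoint rainbow triangle $uvyu$ (if no such $u$ exists, $d^c(y)\le 4$, contradicting $n\ge 8$); or every pair is monochromatic toward both $y$ and $z$, and then $d^c(z),d^c(y)\ge\frac{n+1}{2}$ force indices with $c(za_i)\neq c(za_j)$ and $c(yb_k)\neq c(yb_l)$, yielding the rainbow triangles $za_ia_jz$ and $yb_kb_ly$, which are vertex-disjoint because one uses only $a$-vertices and the other only $b$-vertices. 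You would need to establish this ``all cross edges get new colors'' fact (or something equally strong) and then carry out an explicit construction of this kind; as written, your proposal does not yet prove the lemma.
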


\begin{proof}
Since $\delta^c(G)\geq \frac{n+1}{2}$,
we have $\delta^c(G')\geq \delta^c(G)-2\geq \frac{|G'|-1}{2}$.
From Theorem \ref{10},
$d^c_{G'}(v)=\frac{|G'|-1}{2}$ for $v\in V(G')$ and
$G'$ has a partition $\{A_i,0\leq i\leq \frac{n-1}{2} \}$.
Assume that $A_0=\{v\}$ and $A_i=N_i(v)$ for $1\leq i\leq \frac{n-1}{2}$.
Since $\delta^c(G)\geq \frac{n+1}{2}$,
the edges from every vertex in $G'$ to $z$ and $y$
are assigned two new colors.
Let $N_i(v)=\{a_i,b_i\}$.
If there is a set $N_k(v)$ such that $c(za_k)\neq c(zb_k)$,
then $C(y,G'\setminus (\{v\}\cup N_k(v)))=\{c(vy)\}$.
If not, there is a vertex $u\in G'\setminus (\{v\}\cup N_k(v))$
such that $c(uy)\neq c(vy)$, and then $uvyu$ is a disjoint rainbow triangle from $xa_kb_kx$.
So, $d^c(y)\leq 4$, a contradiction.
Hence, for any set $N_i(v)$, $c(za_i)=c(zb_i)$ and $c(ya_i)=c(yb_i)$.
Since $d^c(z)\geq \frac{n+1}{2}$,
there are two sets $N_i(v)$ and $N_j(v)$, $i\neq j$,
such that $c(za_i)\neq c(za_j)$.
Then $za_ia_jz$ is a rainbow triangle.
Similarly, we can find another rainbow triangle $yb_kb_ly$,
a contradiction.
\end{proof}

\begin{lemma}\label{l2}
Let $G$ be an edge-colored graph of order $n\geq 7$.
If $\delta^c(G)\geq \frac{n+2}{2}$ and
there are two vertices $y,z$ such that $G'=G-\{y,z\}$
has no rainbow triangles,
then $G$ has two vertex-disjoint rainbow triangles containing $y$ and $z$,
respectively.
\end{lemma}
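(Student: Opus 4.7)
The plan is to mirror the proof of Lemma \ref{l1}, using Theorem \ref{16} in place of Theorem \ref{10} to determine the structure of $G'$. Since $\delta^c(G') \geq \delta^c(G) - 2 \geq \frac{n-2}{2} = \frac{|G'|}{2}$, and since $n$ odd would round this up to $\frac{|G'|+1}{2}$ and then contradict the hypothesis via Theorem \ref{15}, I may assume $n$ is even; since $n \geq 7$ forces $|G'| = n-2 \geq 6 > 4$, Theorem \ref{16} yields $G' = K_{m,m}$ with $m := (n-2)/2 \geq 3$. Write $A, B$ for the bipartition. For each $u \in V(G')$, the $m$ edges from $u$ to the opposite part already use $m$ distinct colors, so the bound $d^c_G(u) \geq m+2$ forces both $uy$ and $uz$ to be edges of $G$, to carry distinct colors, and to have colors not appearing on any $G'$-edge at $u$. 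Consequently, for $a \in A$ and $b \in B$, the triangle $yab$ (resp.\ $zab$) is rainbow iff $c(ya) \neq c(yb)$ (resp.\ $c(za) \neq c(zb)$), since the color of $ab$ is automatically different from the other two.

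I would then produce the first triangle $T_1 = \{y, a_1, b_1\}$: if no such pair existed, every edge from $y$ to $A \cup B$ would carry a single common color, forcing $d^c(y) \leq 2$ against $d^c(y) \geq m+2 \geq 5$. For a disjoint second triangle $T_2 = \{z, a_2, b_2\}$ I restrict to $a_2 \in A \setminus \{a_1\}$ and $b_2 \in B \setminus \{b_1\}$; if no suitable pair existed there, the same identification step would force all edges from $z$ to $(A \cup B) \setminus \{a_1, b_1\}$ to share a single color $\gamma$, whence $C(z) \subseteq \{\gamma, c(za_1), c(zb_1), c(zy)\}$ and $d^c(z) \leq 4$, contradicting $d^c(z) \geq m+2 \geq 5$. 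The triangles $T_1$ and $T_2$ are vertex-disjoint by construction.

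The main obstacle is the bookkeeping around edges that may be missing when $G$ is not complete; the color-degree slack of $2$ above $m$ is exactly what guarantees both $uy$ and $uz$ exist for every $u \in V(G')$, which in turn lets the bipartite structure of $G'$ combine with $y$ and $z$ to form genuine triangles. Compared to Lemma \ref{l1}, the strengthening from $\frac{n+1}{2}$ to $\frac{n+2}{2}$ is precisely what makes the final bound $d^c(z) \leq 4$ strictly smaller than $m+2$ once $m \geq 3$, and also what rules out the odd-$n$ case via Theorem \ref{15}.
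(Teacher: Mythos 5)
Your proof is correct and takes essentially the same route as the paper: apply Theorem \ref{16} to $G'$ (after noting $\delta^c(G')\ge |G'|/2$ and ruling out odd $n$) to get the properly colored balanced complete bipartite structure, use the slack of two in the color-degree bound to force every vertex of $G'$ to send edges of fresh, distinct colors to $y$ and $z$, and then extract the two vertex-disjoint rainbow triangles. You merely fill in the disjointness bookkeeping (the $d^c(y)\le 2$ and $d^c(z)\le 4$ contradictions) that the paper compresses into ``we can easily find two vertex-disjoint rainbow triangles.''
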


\begin{proof}
Since $\delta^c(G)\geq \frac{n+1}{2}$,
we have $\delta^c(G')\geq \delta^c(G)-2\geq \frac{|G'|-1}{2}$.
According to Theorem \ref{16},
we know that $G'$ is a properly colored balanced complete bipartite graph.
Since $\delta ^c(G)\geq \frac{n+2}{2}$,
we have that $vz$ and $zy$ are in $E(G)$ and $d^c(v)=\frac{n+2}{2}$ for every vertex $v\in V(G)$.
Thus, we can easily find two vertex-disjoint rainbow triangles containing $z$ and $y$, respectively,
a contradiction.
\end{proof}

\textbf{Proof of Theorem \ref{11}:}
Since $\delta^c(G)\geq \frac{n}{2}$,
according to Theorem \ref{8},
$G$ has a rainbow triangle $xyzx$.
Let $T(G)$ be the set of all rainbow triangles in $G$.
Suppose, to the contrary,
that $G$ has no vertex-disjoint rainbow triangles.
Then each rainbow triangle in $T(G)\setminus \{xyzx\}$
meets at least one of $\{x,y,z\}$.
Let $W_1 (W_2,W_3)$ denote the subset of vertices in $V(G)\setminus \{x,y,z\}$,
in which every vertex is contained in a rainbow triangle together with $x(y,z)$.
Now we proceed by proving the following claims.

\begin{claim}\label{c12}
$W_i\neq \emptyset$, for $i=1,2,3$.
\end{claim}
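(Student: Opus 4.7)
The plan is to prove $W_1\neq\emptyset$ by contradiction and then invoke symmetry between $x,y,z$ for $W_2$ and $W_3$. So suppose $W_1=\emptyset$. I want to deduce from this that the subgraph $G'=G-\{y,z\}$ contains no rainbow triangle at all, which will put us in the exact hypothesis of Lemma~\ref{l1}.

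First I would observe that any rainbow triangle $T$ sitting inside $G'$ avoids $y$ and $z$. If $T$ also avoided $x$, then $T$ would be vertex-disjoint from $xyzx$, contradicting our standing assumption that $G$ has no two vertex-disjoint rainbow triangles. So $T$ must contain $x$, say $T=xuw$ with $u,w\in V(G)\setminus\{x,y,z\}$. But then $u\in W_1$ by the very definition of $W_1$, contradicting $W_1=\emptyset$. Hence $G'$ has no rainbow triangle.

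Next, since $n\ge 8$ and $\delta^c(G)\ge\frac{n+1}{2}$, Lemma~\ref{l1} applied to the pair $y,z$ produces two vertex-disjoint rainbow triangles in $G$, one containing $y$ and one containing $z$. This contradicts the assumption that $G$ contains no two vertex-disjoint rainbow triangles, so $W_1\neq\emptyset$. The argument is symmetric in $\{x,y,z\}$: replacing the deleted pair by $\{x,z\}$ gives $W_2\neq\emptyset$, and by $\{x,y\}$ gives $W_3\neq\emptyset$.

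I do not expect any real obstacle here; the only thing to be careful about is the reduction to Lemma~\ref{l1}. In particular, one must verify that \emph{every} rainbow triangle of $G'$ would force a vertex into $W_1$, which is exactly the contrapositive argument above, and that the hypotheses of Lemma~\ref{l1} (namely $n\ge 8$ and $\delta^c(G)\ge\frac{n+1}{2}$) are inherited directly from the hypotheses of Theorem~\ref{11}.
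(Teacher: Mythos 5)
Your proposal is correct and follows essentially the same route as the paper: assume $W_1=\emptyset$, note that then every rainbow triangle of $G-\{y,z\}$ would have to contain $x$ and hence put a vertex into $W_1$, so $G-\{y,z\}$ is rainbow-triangle-free, and Lemma~\ref{l1} yields two vertex-disjoint rainbow triangles, a contradiction; symmetry handles $W_2$ and $W_3$. The paper's proof is just a terser version of this same argument.
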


\begin{proof}
W.l.o.g., suppose $W_1=\emptyset$.
Then each rainbow triangle in $T(G)\setminus \{xyzx\}$
meets $y$ or $z$. From Lemma \ref{l1},
we can find two vertex-disjoint rainbow triangles in $G$,
a contradiction.
\end{proof}

\begin{claim}\label{c10}
For any set $W_i$,
there is a vertex in $W_i$ but not in $W_j$, $1\leq i\neq j \leq 3$.
\end{claim}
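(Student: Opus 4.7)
The plan is to argue by contradiction. Suppose that for some ordered pair $(i,j)$ with $i\ne j$ in $\{1,2,3\}$ one has $W_i\subseteq W_j$; by the symmetry of the roles of $x,y,z$ in the rainbow triangle $xyzx$, I may take $W_1\subseteq W_2$. The aim is to produce two vertex-disjoint rainbow triangles of $G$, contradicting the standing hypothesis in the proof of Theorem \ref{11}.

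The first move is a preliminary observation I plan to use repeatedly: by Lemma \ref{l1} combined with the standing hypothesis, for every pair $p,q\in V(G)$ the subgraph $G-\{p,q\}$ must contain at least one rainbow triangle (otherwise Lemma \ref{l1} already produces two vertex-disjoint rainbow triangles through $p$ and $q$). Applying this to $\{y,z\}$, the guaranteed rainbow triangle must contain $x$ since every rainbow triangle meets $\{x,y,z\}$; write it as $T_0=xab$ with $a,b\in V(G)\setminus\{x,y,z\}$. Then the inclusion $W_1\subseteq W_2$ gives $a,b\in W_2$, so there exist rainbow triangles $T_a=ya\alpha$ and $T_b=yb\beta$.

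The structural step is to show that every rainbow triangle of $G$ through $y$ or through $z$ must meet $\{x,a,b\}$, for any rainbow triangle through $y$ or $z$ that avoided $\{x,a,b\}$ would be vertex-disjoint from $T_0$ and finish the proof at once. Together with the fact that every rainbow triangle meets $\{x,y,z\}$, this forces the complete subgraph $H:=G-\{x,a,b\}$ to contain no rainbow triangle whatsoever.

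The main obstacle is then translating the triangle-freeness of $H$ into a numerical contradiction. When $n$ is even we gain $\delta^c(G)\geq \tfrac{n+2}{2}$, so $\delta^c(H)\geq \tfrac{n-4}{2}=\tfrac{|H|-1}{2}$, and Theorem \ref{10} pins $H$ down as one of the explicitly described extremal structures; I would then use the $n-3$ fresh colors on the edges between $H$ and $\{x,a,b\}$, together with the color-degree hypothesis at each vertex of $H$, to locate three vertices of $H$ from distinct parts which form, with an appropriate edge to $\{x,a,b\}$, an extra rainbow triangle disjoint from $T_0$. For odd $n$ the naive bound loses one to the $\tfrac{|H|-1}{2}$ threshold of Theorem \ref{10}, so my plan is to instead apply the preliminary observation to the pairs $\{x,a\}$ and $\{x,b\}$: the resulting rainbow triangles must contain $b$ and $a$ respectively (else disjointness from $T_0$ is immediate), and each must also meet $\{y,z\}$. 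A short case check on the pair of handles in $\{y,z\}$---with $n\geq 8$ giving the pigeonhole room to vary the third vertex inside the color-degree neighborhood of $y$ or $z$---produces in the mixed-handle case two vertex-disjoint rainbow triangles. The delicate point, where $n\geq 8$ is genuinely needed, is ruling out the same-handle subcases by invoking the color-degree lower bound $\delta^c(G)\geq \tfrac{n+1}{2}$ at $y$ (or $z$) to swap the third vertex of one of the triangles.
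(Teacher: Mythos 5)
Your opening moves are fine: under the standing hypothesis that $G$ has no two vertex-disjoint rainbow triangles, Lemma \ref{l1} does force a rainbow triangle in $G-\{p,q\}$ for every pair $p,q$, the one in $G-\{y,z\}$ must be of the form $T_0=xab$, and every rainbow triangle of $G$ must meet $\{x,a,b\}$, so $H=G-\{x,a,b\}$ has no rainbow triangle. But from that point on the proposal is only a plan, and the decisive steps do not work as described. In the even-$n$ branch your stated goal is self-defeating: a rainbow triangle disjoint from $T_0$ would have to lie entirely inside $H$, which you have just shown contains none, and a triangle ``with an appropriate edge to $\{x,a,b\}$'' is not disjoint from $T_0$; what you would actually need is a three-vertex analogue of Lemma \ref{l1} producing two disjoint rainbow triangles each through a vertex of $\{x,a,b\}$, and that is nowhere proved. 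In the odd-$n$ branch, the triangles $T_1\ni b$ from $G-\{x,a\}$ and $T_2\ni a$ from $G-\{x,b\}$ may intersect even in the mixed-handle case (e.g.\ $T_1=byz$, $T_2=azy$, or a common third vertex), and in the same-handle case ``swapping the third vertex'' cannot remove the shared vertex $y$; invoking $\delta^c(G)\geq\frac{n+1}{2}$ does not by itself yield an alternative rainbow triangle through $y$ or $z$ avoiding prescribed vertices --- that re-routing is exactly the hard content of the whole theorem, which the paper only obtains through the rigid structure of Theorem \ref{10} inside the proof of Lemma \ref{l1}. A telling symptom is that your contradiction hypothesis $W_1\subseteq W_2$ is never used after it produces $T_a$ and $T_b$, which are then dropped; if the remaining outline were sound it would prove Theorem \ref{11} outright with no need for the sets $W_i$ at all.

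For comparison, the paper's route is a two-line reduction: it negates the claim in the stronger form $W_1\subseteq W_2\cup W_3$ (which is also the form Claim \ref{c11} later needs --- your version $W_i\not\subseteq W_j$ is strictly weaker), argues that then $G-\{y,z\}$ contains no rainbow triangle, and applies Lemma \ref{l1} to get two disjoint rainbow triangles. So the intended proof never leaves the two-vertex-deletion setting where Lemma \ref{l1} applies, whereas your plan moves to the three-vertex deletion $G-\{x,a,b\}$, for which no supporting lemma is available and the gap is genuine.
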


\begin{proof}
W.l.o.g., suppose, to the contrary,
that $W_1\subseteq W_2\cup W_3$.
Then there is no rainbow triangle in $G'=G-\{z,y\}$.
Therefore, from Lemma \ref{l1}
we can find two vertex-disjoint rainbow triangles in $G$,
a contradiction.
\end{proof}

\begin{claim}\label{c11}
There is a vertex $a_0$ such that
all rainbow triangles in $T(G)\setminus \{xyzx\}$
meet at $a_0$.
\end{claim}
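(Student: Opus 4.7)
The approach is to extract three specially chosen rainbow triangles pinned at $x$, $y$, $z$ respectively, show their ``third vertices'' must coincide at a single vertex $a_0$, and then argue every other rainbow triangle must also contain $a_0$ by a case analysis driven by the non-existence of two vertex-disjoint rainbow triangles.

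First, I would apply Claim \ref{c10} to pick $a_1\in W_1\setminus(W_2\cup W_3)$, $a_2\in W_2\setminus(W_1\cup W_3)$, and $a_3\in W_3\setminus(W_1\cup W_2)$; these three vertices are automatically pairwise distinct. For each $i$ let $T_i$ be a rainbow triangle containing $a_i$ together with $x$, $y$, or $z$ respectively, which exists by the definition of $W_i$. The key observation is that any rainbow triangle through $a_1$ must avoid both $y$ and $z$ (since $a_1\notin W_2\cup W_3$); combined with the standing assumption that every rainbow triangle in $T(G)\setminus\{xyzx\}$ meets $\{x,y,z\}$, this forces such a triangle to contain $x$. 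Symmetric statements hold for $a_2$ and $a_3$.

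Next, since $G$ has no two vertex-disjoint rainbow triangles, $T_1\cap T_2\neq\emptyset$. The intersection cannot be $x$ (else $x\in T_2$ would give $a_2\in W_1$), cannot be $y$ (symmetric), cannot be $a_1$ (else $a_1\in W_2$), and cannot be $a_2$; since $a_1\neq a_2$, the shared vertex must be the as-yet-unnamed third vertex of each triangle. Hence $T_1=\{x,a_1,a_0\}$ and $T_2=\{y,a_2,a_0\}$ for a single vertex $a_0\notin\{x,y,z,a_1,a_2\}$. Running the same argument on the pair $T_1,T_3$ forces $T_3=\{z,a_3,a_0\}$ with exactly the same $a_0$, because $a_0$ is already pinned down as the third vertex of $T_1$.

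Finally, take an arbitrary rainbow triangle $T\in T(G)\setminus\{xyzx\}$ and split into cases on whether some $a_i$ lies in $T$. If $a_1\in T$, the key observation gives $x\in T$ and $\{y,z\}\cap T=\emptyset$; the non-disjointness $T\cap T_2\neq\emptyset$ then excludes $y$ and excludes $a_2$ (else $a_2\in W_1$), hence forces $a_0\in T$. The cases $a_2\in T$ and $a_3\in T$ are symmetric. If $T$ contains none of $a_1,a_2,a_3$, then non-disjointness with $T_1,T_2,T_3$ forces $T$ to intersect each of $\{x,a_0\}$, $\{y,a_0\}$, $\{z,a_0\}$; if $a_0\notin T$ then $T\supseteq\{x,y,z\}$, contradicting $T\neq xyzx$. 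In every case $a_0\in T$, which is exactly the claim.

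The main obstacle is the rigid collapse argument in the second step: verifying that the third vertices of $T_1,T_2,T_3$ really must all coincide depends on simultaneously excluding each of the named vertices $x,y,z,a_1,a_2,a_3$ from the pairwise intersections, which rests critically on the separating choices supplied by Claim \ref{c10}. Once $a_0$ is isolated, the closing case analysis is essentially forced.
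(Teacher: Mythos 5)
Your proposal is correct and follows essentially the same route as the paper: choose $a_i\in W_i\setminus(W_j\cup W_k)$ via Claim \ref{c10}, force the three triangles at $x,y,z$ to share a common third vertex $a_0$, and then show any further rainbow triangle avoiding $a_0$ would yield two vertex-disjoint rainbow triangles. In fact, your write-up supplies the intersection case analysis that the paper compresses into ``otherwise we can easily find two disjoint rainbow triangles.''
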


\begin{proof}
By Claim \ref{c10},
let $a_i\in W_i\setminus (W_j\cup W_k)$, $1\leq i\neq j\neq k\leq 3$.
Then there is a vertex $a_0$ such that
$xa_1a_0x$, $ya_2a_0y$ and $za_3a_0z$ are rainbow triangles,
otherwise we can easily find two disjoint rainbow triangles.
Suppose that there is a rainbow triangle $uvwu$ in $T(G)\setminus \{xyzx\}$
such that $a_0\notin\{u,v,w\}$.
Then we can easily find two vertex-disjoint rainbow triangles, a contradiction.
\end{proof}

Let $G''=G-\{x,a_0\}$.
Then $\delta^c(G'')\geq \frac{|G''|-1}{2}$.
From Claim \ref{c11},
there is no rainbow triangle in $G''$.
Hence, from Lemma \ref{l1}
we can find two vertex-disjoint rainbow triangles in $G$ containing $x$ and $a_0$,
respectively,
a contradiction.
This completes the proof of Theorem \ref{11}.
$\hfill\qedsymbol$\\

\textbf{Proof of Theorem \ref{14}:}
Using Theorems \ref{15} and \ref{16} in \cite{LNXZ}, and
Lemma \ref{l2} as well as by an analogue of the proof of Theorem \ref{11},
we can get the result of Theorem \ref{14}.
$\hfill\qedsymbol$

\end{document}